\newcommand{\abs}[1]{\lvert#1\rvert}
\newcommand{\set}[1]{\left\{#1\right\}}
\newcommand{\dt}{\partial_{t}}
\newcommand{\Z}{\mathbb{Z}}
\newcommand{\trho}{\tilde{\rho}}
\newcommand{\ttheta}{\tilde{\theta}}
\newcommand{\hrho}{\hat{\rho}}
\newcommand{\htheta}{\hat{\theta}}
\newcommand{\hsigma}{\hat{\sigma}}
\newcommand{\tsigma}{\tilde{\sigma}}
\newcommand{\tgamma}{\tilde{\gamma}}
\newtheorem{thm}{Theorem}[section]
\newtheorem{prop}{Proposition}[section]
\newtheorem{lem}[prop]{Lemma}
\newtheorem{rem}[prop]{Remark}
\title[Discrete Rarefaction Waves]{On discrete rarefaction waves in an
  NLS Toy Model for weak turbulence}
\author[S.~Herr]{Sebastian~Herr}
\address{Universit\"at Bielefeld, Fakult\"at f\"ur Mathematik,
  Postfach 10 01 31, 33501 Bielefeld, Germany}
\email{herr@math.uni-bielefeld.de}
\author[J.~L.~Marzuola]{Jeremy~L.~Marzuola} \address{Mathematics
  Department, University of North Carolina - Chapel Hill, Chapel Hill,
  NC 27599, USA} \email{marzuola@math.unc.edu}
\begin{document}

\begin{abstract}
  Recently, a model Hamiltonian dynamical system has been derived to
  study frequency cascades in the cubic defocusing nonlinear
  Schr\"odinger equation on the torus.  Here, we explore certain
  rarefaction wave-like solutions to this toy model which transfer
  mass from low to high modes.
\end{abstract}

\maketitle


\section{Introduction}
\label{s:intro}
In \cite{CKSTT} the authors Colliander-Keel-Staffilani-Takaoka-Tao
studied the $2d$ defocusing cubic toroidal nonlinear Schr\"odinger
equation,
\begin{equation}
  \label{e:dcnls}
  i u_t + \Delta u - |u|^2 u = 0, \ \ u(0,x) = u_0(x) \ \text{for} \ x \in \mathbb{T}^2,
\end{equation}
by developing their ``Toy Model System'' given by the equation
\begin{equation}
  \label{e:toy_model}
  -i\dt b_j (t) = -\abs{b_j(t)}^2 b_j(t) + 2 b_{j-1}^2 \overline{b_j}(t)
  + 2 b_{j+1}^2 \overline{b_j}(t),\quad \text{for $j=1\ldots N$}
\end{equation}
with ``boundary conditions''
\begin{equation}
  \label{e:dirichletbc}
  b_0(t) = b_{N+1}(t) = 0.
\end{equation}
The $b_j$ approximate the mass associated with families of resonantly
interacting frequencies.  In \cite[Section 5]{CMOS1}, the authors
derive a discrete Burgers type equation with a phase drag term (see
\eqref{e:hydro1}, \eqref{e:hydro2} below) and study its numerical
stability within \eqref{e:toy_model}.

The goal of this paper is to show that a discrete rarefaction wave
associated to the Burgers equation can be used to initiate interesting
dynamics in \eqref{e:toy_model}. In particular, the aim is to prove
that this mechanism transfers mass from low to high frequency
nodes on a short time scale.
In addition, while much of the global structure of the rarefaction
wave-like solutions in \eqref{e:toy_model} remains challenging to
describe fully, we present several computations that give insight into
the longer time behavior of discrete rarefaction wave solutions as
observed in \cite{CMOS1} and are consistent with further mass
transfer.

The main goal of developing \eqref{e:toy_model} in \cite{CKSTT} is the
construction of a solution to \eqref{e:toy_model} which transfers mass
from low index $j$ to high $j$. In other words, the goal is to
robustly construct frequency cascades to show that, as stated in
\cite{CMOS1}, ``dispersive equations posed on tori have weakly
turbulent dynamics; while there may be no finite time singularity,
arbitrarily high index Sobolev norms can grow to be arbitrarily large,
but finite, in finite time.''  We note that here we are focusing on
the dynamical system in \eqref{e:toy_model} and attempting to
ascertain how robust the rarefaction wave structure is under the
``phase drag'' inherent to dispersive Schr\"odinger models and the Toy
Model in particular.  For other works related to frequency cascades
and the study of weak turbulence for NLS, we refer the reader to
\cite{bourgain1995cauchy,bourgain1995aspects,bourgain1996growth,B04}
as well as the interesting and recent works
\cite{Carles:2012jv,Hani2,GK12,Ku1,Soh1,GT1,grebert2012beating,guardia2012growth,haus2012dynamics,faou2013weakly,hani2013modified}.

The paper will proceed as follows.  In Section \ref{s:conlaw}, we
recall some conserved quantities related to the Toy Model to be
applied later.  Then, in Section \ref{s:discrar}, we recall the
modified discrete Burgers equation and corresponding rarefaction wave
approximation using the Madelung transformation from \cite{CMOS1}.  We
proceed in Section \ref{s:rarefaction} to discuss properties of
rarefaction wave solutions to a discrete Burgers equation, drawing
largely from an explicit solution in \cite{B-NK1,B-NR1}, and study
boundary effects in symmetric discrete Burgers equation. In Section
\ref{sect:pert} we prove an error bound for the discrete Burgers
rarefaction wave in \eqref{e:toy_model} rigorously (and arguably
sharply) using a Gronwall type argument.  Finally, in Section
\ref{sect:rem}, we present flux computations related to truncated
conservation laws, discuss future work, open problems and an
illustrative computation about the rarefaction wave linearization in
discrete $L^2$ spaces, which we hope will provide for more robust
control of mass transfer through rarefaction waves in the Toy Model.

\section{Conserved Quantities}
\label{s:conlaw}
As they will be quite useful in our studies below, we recall here the
results from \cite[Section 3]{CKSTT}, where the Toy Model is studied
as a Hamiltonian dynamical system. The Hamiltonian is given by
\begin{equation}
  \label{e:hamiltonian}
  H [{\bf b}] = \sum_{j=1}^\infty \left( \frac12 | b_j |^4 - 2 \Re ( \bar{b}_j^2 b_{j-1}^2) \right)
\end{equation}
and symplectic structure,
\begin{equation}
  \label{e:symplectic}
  i \frac{db_j}{dt} = \frac{\partial H [{\bf b}]}{\partial \bar b_j},
  \quad j\in \mathbb{N}.
\end{equation}
The model \eqref{e:toy_model} admits many of the symmetries of
\eqref{e:dcnls}, including phase invariance, scaling, time translation
and time reversal.  However, many of these symmetries are redundant,
and the known only invariant, other than \eqref{e:hamiltonian}, is the
mass quantity
\begin{equation}
  \label{e:truncmass}
  M [{\bf b} ]=  \sum_{j=1}^\infty |b_j|^2.
\end{equation}

Using the structure of the equations, it can be seen that given $b(0)$
initially compactly supported (on a finite number of nodes), the
solution $b(t)$ remains compactly supported for all time.  While we
are summing over all nodes above to observe conservation of $H$ and
$M$, as the equation with boundary conditions \eqref{e:dirichletbc}
remains compactly supported on the same region, both $H$ and $M$ are
still conserved when one sums only from $j = 1$ to $N$.

Defining circles $\mathbb{T}_j$ for $j = 1,\dots,N$ as
\begin{equation*}
  \mathbb{T}_j = \{ {\bf b} = (b_1, \dots, b_N) \ |  \ |{\bf b}|^2 = 1, \  |b_j| =1, \ b_k = 0 \ \text{for all} \ k \neq j \},
\end{equation*}
the authors in \cite{CKSTT} point out that the flow of
\eqref{e:toy_model}, which is referred to as $S(t) b_0$, leaves each
${\mathbb T}_j$ invariant.  In \cite{CMOS1}, it was observed that
\eqref{e:toy_model} also has a natural probabilistic formulation and
can be seen to have some basic recurrence properties.

\section{Discrete Rarefaction Waves and the Discrete Burgers Equation}
\label{s:discrar}
In this section, we recall the main aspects of \cite[Section
5]{CMOS1}. First, we make the Madelung transformation given by
\begin{equation}
  \label{e:mad}
  b_j (t) = \sqrt{ \rho_j} e^{ i \phi_j (t) }.
\end{equation}
with \emph{out of phase} initial interactions set by $\phi_j (0)=
\phi_{j-1} (0) + \frac{\pi}{4}$.  Initially, the hydrodynamic
equations have a Burgers type structure
\begin{equation}
  \label{e:burgers}
  \left\{ \begin{array}{l}
      \dot\phi_j = 0  \\
      \dot\rho_j = -4 \rho_j \rho_{j-1}  - 4 \rho_j \rho_{j+1}  = -8 \rho_j \left( \frac{ \rho_{j+1} - \rho_{j-1} }{ 2} \right).
    \end{array} \right. 
\end{equation}
This system has beautiful discrete rarefaction waves propagating
towards infinity and a backwards dispersive shock. We also refer the
reader to \cite{HerrmannRademacher} for another example of a discrete
conservation law with a dispersive shock, cf.\ Subsection
\ref{subsect:other}.  We call this the discrete Burgers equation since
in the continuum limit we would have
\begin{equation*}
  \rho_t = -8 \rho \nabla \rho,
\end{equation*}
which, with initial data
\begin{equation*}
  \rho (0,x) = \left\{  \begin{array}{ll}
      0 \ & \  x \leq 0, \\
      1 \ & \  0 < x <  \infty,
    \end{array}  \right.
\end{equation*}
has the known rarefaction wave solution
\begin{equation*}
  \rho (t,x) = \left\{  \begin{array}{ll}
      0 \ & \  x < 0, \\
      \frac{x}{8t} \ & \  0 < x <  8t, \\
      1 \  & \ 8t < x.
    \end{array}  \right.  
\end{equation*}

However, in our discrete system, there is drag in the phase
coefficients that does not allow us to permanently assume an out of
phase framework:
\begin{align}
  \label{e:hydro1}
  \dot\phi_j & =  -\rho_j + 2 \rho_{j-1} \cos(2(\phi_{j-1}-\phi_j)) + 2 \rho_{j+1} \cos(2(\phi_{j+1}-\phi_j)),  \\
  \dot\rho_j & = -4 \rho_j \rho_{j-1} \sin(2(\phi_{j-1}-\phi_j)) -4
  \rho_j \rho_{j+1} \sin(2(\phi_{j+1}-\phi_j)). \label{e:hydro2}
\end{align}
Let us recall some numerical simulations from \cite{CMOS1} to motivate
our analysis below.  There, it is numerically studied how solutions
evolve, with the initial condition
\begin{equation}
  \label{e:shock_ic}
  b_j = \exp i \set{ (j-1)\pi/4}.
\end{equation}
In Figure \ref{f:shockshort_compare} we show the numerically computed
time evolution of the Toy Model compared to that of a backward
discrete Burgers equation.

 \begin{figure}
   \subfigure{\includegraphics[width=5cm,type=pdf,ext=.pdf,read=.pdf]{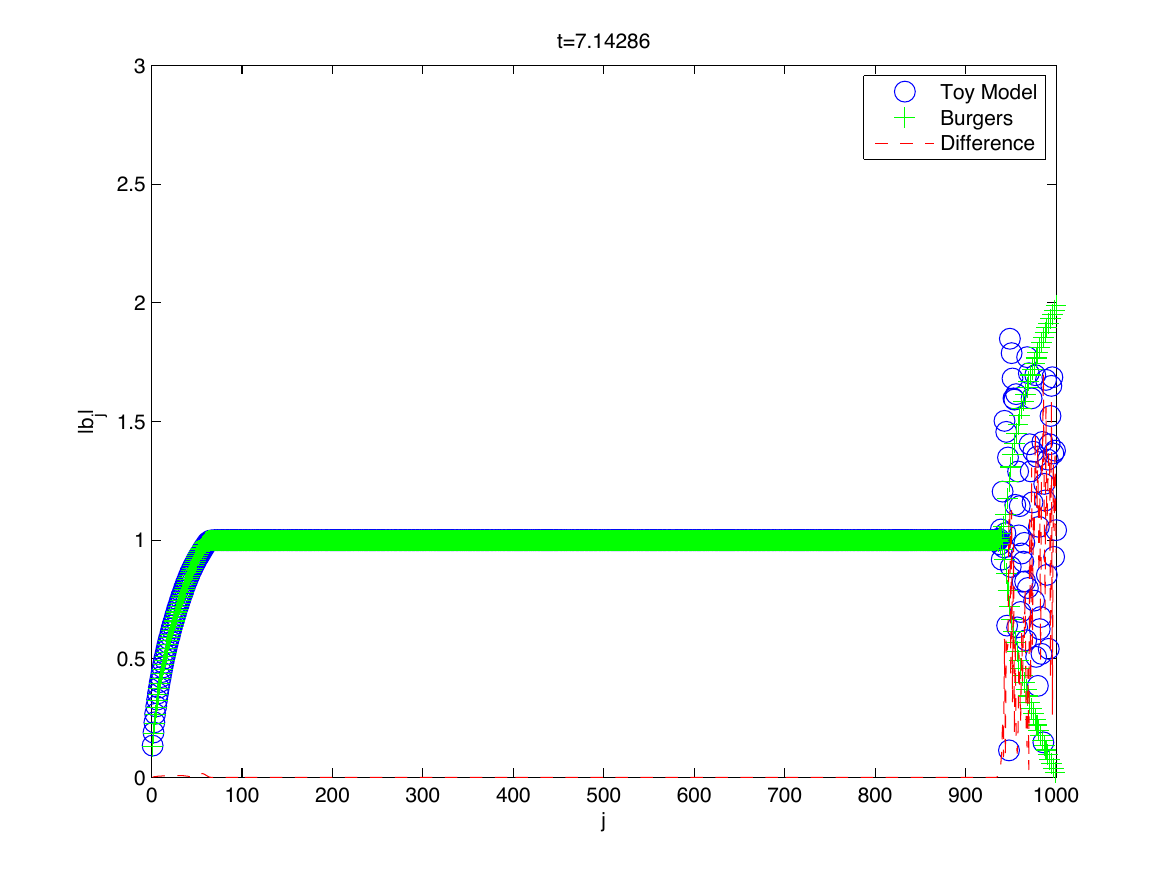}}
   \subfigure{\includegraphics[width=5cm,type=pdf,ext=.pdf,read=.pdf]{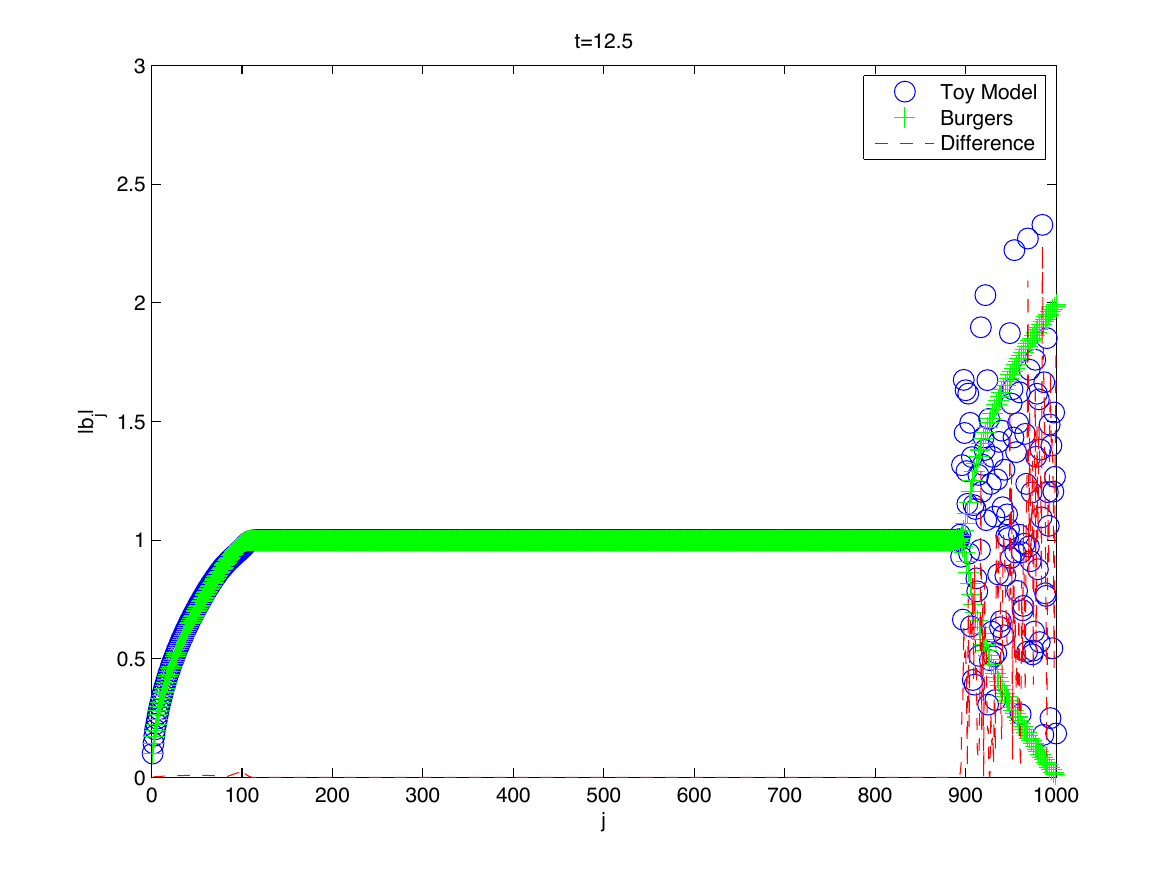}}
   \subfigure{\includegraphics[width=5cm,type=pdf,ext=.pdf,read=.pdf]{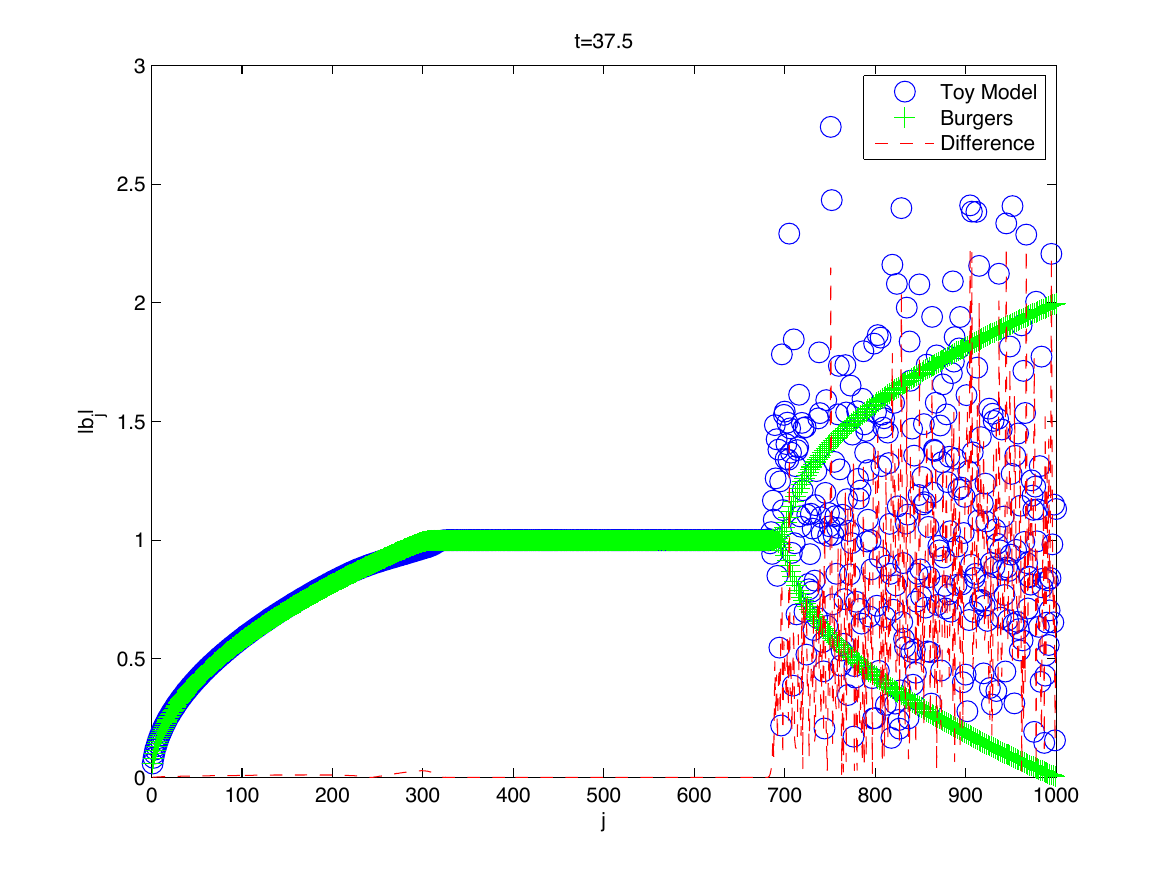}}
   \subfigure{\includegraphics[width=5cm,type=pdf,ext=.pdf,read=.pdf]{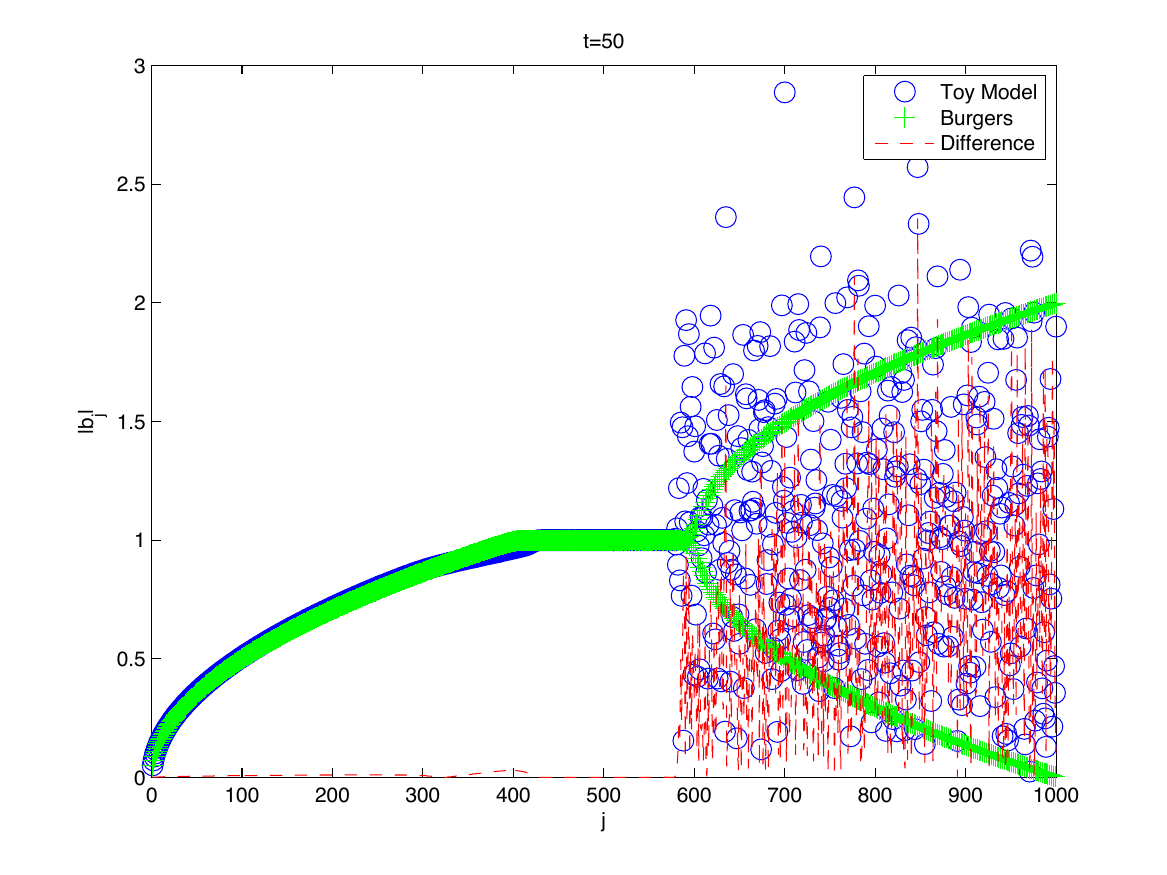}}
   \caption{A comparison of the Toy Model and the symmetric discrete Burgers
     rarefaction waves.}
   \label{f:shockshort_compare}
 \end{figure}

 \section{Analysis of the Rarefaction Wave}
 \label{s:rarefaction}

 In this section, we present some analytic results on the rarefaction
 wave.  In particular, we compare solutions to the discrete Toy Model
 in the hydrodynamic formulation to computations from an explicit
 solution to a discrete Burgers Equation, which behaves comparably to
 a continuous Burgers equation.

 \subsection{Alternative Coordinates}\label{subsect:alt}

 Since the drag term in the phase (the term $-\rho_j$ in
 \eqref{e:hydro1} and $-i |b_j|^2 b_j$ in \eqref{e:toy_model})
 introduces errors in the Burgers evolution, we introduce the
 coordinate $\theta_j = \phi_j - \phi_{j-1}$.  In the new coordinates,
 we have
 \begin{align}
   \label{e:hydroalt1}
   \dot\theta_j & =  -(\rho_j-\rho_{j-1}) - 2 ( \rho_j - \rho_{j-1}) \cos(2 \theta_j) \\
   &\hspace{1cm}+ 2 \rho_{j+1} \cos(2 \theta_{j+1})  - 2 \rho_{j-2} \cos(2 \theta_{j-1}) ,  \notag \\
   \dot\rho_j & = 4 \rho_j \rho_{j-1} \sin(2\theta_j) -4 \rho_j
   \rho_{j+1} \sin(2 \theta_{j+1}). \label{e:hydroalt2}
 \end{align}

 \subsection{Scaling Discrete Burgers}
 \label{subsect:scaling}

 We wish to use an exact solution to a discrete Burgers equation in order to establish an envelope solution to the Toy Model.  The envelope solution we wish to follow is for the symmetric Burgers equation
  \begin{equation}
   \label{e:symburgers}
   \dot{\tilde{\rho}}_j =  - \tilde{\rho}_j \left(  \tilde{\rho}_{j+1} - \tilde{\rho}_{j-1} \right).
 \end{equation} 
 It is possible to solve such an equation using inverse scattering (see \cite{GoodmanLax,GHSZ-Toda,KvM} as we will discuss later in Section \ref{s:numerics}), but a priori bounds on the amplitude then become less clear than is desirable for comparison with the Toy Model.  
The
 best treatment of which we have found in the works
 \cite{B-NK1,B-NR1}, where for the backward discrete Burgers
 \begin{equation}
   \label{e:dbackburgers}
   \dot{p}_j =  - p_j \left(  p_{j} - p_{j-1} \right),
 \end{equation} 
 an explicit solution is derived.  To do so, they introduce the
 transformation $p_j = \frac{a_j}{a_{j+1}}$, and the
 problem is converted to the recursively solved linear system of ODEs
 \begin{equation*} {\dot a}_j = a_{j-1}.
 \end{equation*}
 With initial data configured only for a rarefaction wave solution
 \begin{equation}
   \label{e:initraref}
   p_j (0) = \left\{  \begin{array}{l}
       0 \  \  j \leq 0, \\
       1 \ \  1\leq j <  \infty,
     \end{array}  \right.  
 \end{equation} 
 this has a solution of the form
 \begin{equation*}
   a_j(t) =  (1 +  t  + \dots + \frac{1}{(j-1)!} t^{j-1}).
 \end{equation*}
 Note, this solution does not have finite speed of propagation, in
 that every non-zero $q_j$ will change for all $t>0$, not just those
 to the left of a front as in the continuous shock solution.  However,
 the resulting solution in \eqref{e:dbackburgers} is
 \begin{align*}
   p_j (t) & =  \frac{ 1 + t  + \dots + \frac{1}{(j-1)!} t^{j-1} }{ 1 + t  + \dots + \frac{1}{(j)!} t^{j} }   =  1 - \frac{ \frac{1}{(j)!} t^{j} }{ 1 + t  + \dots + \frac{1}{(j)!} t^{j} } \\
   & = 1 - \frac{1}{(j)!} t^{j} + h.o.t.
 \end{align*}
 for short times $t$.  Then, to solve the rescaled Burgers equation
 \begin{align}
   \label{e:scaleddbackburgers}
   \dot{p }_j (\alpha,\beta, t) = -\alpha p_j
   \left( p_{j} - p_{j-1} \right), \
   p_j (\alpha, \beta, 0) = \left\{ \begin{array}{l}
       0 \  \  j \leq 0, \\
       \beta \ \ 0 < j < \infty,
     \end{array}  \right.  
 \end{align}
 we have
 \begin{equation*} {p}_j (t)= \beta \frac{ 1 + \alpha \beta t +
     \dots + \frac{1}{(j-1)!} (\alpha \beta t)^{j-1} }{ 1 + \alpha
     \beta t + \dots + \frac{1}{(j)!} (\alpha \beta t)^{j} } .
 \end{equation*}

 Generally, in order to control both the dispersive shock\footnote{In
   discussions with Mark Hoefer, he has suggested that this backward
   propagating wave takes on more features of turbulence than of a
   dispersive shock front. However, to investigate this involves
   connecting the oscillations from the front of the backward
   propagating front in a more robust fashion.} created by a large
 jump near $j=N$ and the phase splitting mechanism at large amplitude
 for lattice sites near $j=1$, the parameter $\beta$ will be set by us
 to simply be $\epsilon/8$ to study rarefaction waves in the Toy
 Model, where the $8$ is a scaling parameter chosen for convenience.
 Indeed, due to the scalings above and the nature of the Toy Model, we
 study the following ``backwards'' discrete Burgers equation,
 \begin{equation}
   \label{e:dburgers}
   \dot{p}_j =  -8 p_j \left(p_{j} - p_{j-1} \right).
 \end{equation}
 We have via the remarkable explicit
 solution from \cite{B-NK1,B-NR1}, the rescaled solution
 \begin{equation}
   \label{e:scaledburgers} \begin{split}{p}_j &= \frac{\epsilon}{8} \frac{
     1 + (\epsilon t) + \dots + \frac{1}{(j-1)!} (\epsilon t) ^{j-1}
   }{ 1 + (\epsilon t) + \dots + \frac{1}{(j)!} (\epsilon t) ^{j} }\\
& =
   \frac{1}{8} \frac{ \partial_t (1 + (\epsilon t) + \dots +
     \frac{1}{(j)!} (\epsilon t) ^{j}) }{ 1 + (\epsilon t) + \dots +
     \frac{1}{(j)!} (\epsilon t) ^{j} } .
 \end{split}
\end{equation}

 While we make use of this rescaling in order to get natural smallness
 in the phase drift term, we can always rescale the solution back to
 order $1$ by the same argument, see Remark \ref{rem:rescale} for a
 further discussion.

 Now, as a leading order description of the behavior in the full Toy
 Model, we propose the following modified discrete Burgers equation
 for capturing the dynamics.
 \begin{equation}
   \label{e:modburgers}
   \left\{ \begin{array}{l}
       \dot {\tilde{\theta}}_j = -(\tilde{\rho}_j-\tilde{\rho}_{j-1})  \\
       \dot {\tilde{\rho}}_j =  \tilde{\rho}_j \tilde{\rho}_{j+1}  - 4 \tilde{\rho}_j \tilde{\rho}_{j}  = -4 \tilde{\rho}_j \left( \tilde{\rho}_{j+1} - \tilde{\rho}_{j-1} \right).
     \end{array} \right. 
 \end{equation}
 Note that these equations are completely decoupled.

 We will show in our analysis of \eqref{e:toy_model} in Section
 \ref{sect:pert} that errors arising around this approximation are
 small on the time scales we study.  It is however the errors in phase
 term that account primarily for the slight deviations from the pure
 rarefaction wave on the left and the approximation of discrete
 symmetric Burgers by discrete backwards Burgers on the right in
 Figure \ref{f:shockshort_compare}.

\subsection{A Priori Estimates for Rarefaction Waves}\label{subsect:gronw1}
In this section, we want to prove a priori bounds on the evolution of \eqref{e:modburgers} based upon a leading order approximation using \eqref{e:dbackburgers}.  We focus on the amplitude equations.  Specifically, we will approximate solutions to
\[
\dot \trho_j = -4 \trho_j (\trho_{j+1} - \trho_{j-1}) \ \ \trho_j (0) = \epsilon/8 \ \text{for} \ j = 1, \dots , N,
\]
by the explicit rarefaction wave solution defined in \eqref{e:scaledburgers} to
\begin{equation}
\label{peq}
\dot p_j = -8 p_j (p_{j} - p_{j-1}), \ \ p_j (0) = \epsilon/8 \ \text{for} \ j = 1, \dots , N.
\end{equation}
Setting $q_j = \trho_j -p_j$ and $\sigma_j = p_{j+1}-p_j$, we observe
\begin{equation}
\label{qeq}
\begin{split}
\dot q_j ={}& -4 p_j (\sigma_{j} - \sigma_{j-1})  - 4 q_j (\sigma_j
+\sigma_{j-1})\\
&- 4 p_j (q_{j+1} - q_{j-1}) - 4  q_j (q_{j+1} - q_{j-1}), \\ 
q_j (0) ={}& 0.
\end{split}
\end{equation}
Let us define the forcing component of this system of ODEs as
\[
F_{j} = -4 p_j (\sigma_{j} - \sigma_{j-1}) .
\]
Moving the term $4 q_j (\sigma_j+\sigma_{j+1})$ to the left hand side and applying an integrating factor argument, we obtain
\begin{align}
  \label{eqn:approx-ineq1}
| q_j | (t)   \leq &  \int_0^t e^{-4 \int_s^t (\sigma_{j} +\sigma_{j-1})(s') ds'  }  \left( | F_j |   + 8 \| q \|_{\infty}   |p_{j} |   +  8 \| q \|_{\infty}   | q_j |   \right)  ds 
\end{align}
uniformly on the time interval $[0,\epsilon^{-1} \delta]$. Here and in
the sequel we use the notation $\|q\|_{\infty}:=\sup_{0\leq t \leq T}\sup_{ j \in \Z}|q_j(t)|$.


The following lemma contains the a priori estimate for $\|q\|_{\infty}$.
\begin{lem}
  Fix $\delta < 1/8$. For all $0<\epsilon \leq \delta$ and $T= \epsilon^{-1} \delta$ and $q_j$, $p_j$ as in \eqref{peq}, \eqref{qeq} respectively, we have
  \begin{equation}
  \label{apqbd}
     \| q \|_{\infty} \leq \frac{\epsilon\delta}{2} .
     \end{equation}
\end{lem}

\begin{proof}
The proof follows by a simple bootstrap argument on
\eqref{eqn:approx-ineq1}.  Let us note that the exponential factor is
uniformly bounded by $1$ for every $j$ except at $j=N$
where it is bounded by $e^{\delta}$ on our time scale.  Then, from the
bootstrapping assumption we obtain
\begin{equation*}
\| q \|_{\infty} \leq e^{\delta} \left( 16 \frac{\epsilon^2}{8^2} T + \epsilon^2 \delta T/2 + 2 \epsilon^2 \delta^2 T \right),
\end{equation*}
which is bounded above by 
\[ 
\frac{\epsilon \delta}{2} \left(  e^{\delta} \left[  \frac12 + \frac{\delta}{2} + 4 \epsilon \delta^2 \right] \right) < \frac{\epsilon \delta}{2}
\]
for any $\delta < 1/8$, provided $\epsilon \leq \delta$.  
\end{proof}

Armed with this a priori estimate, we can get refined estimates for $|q_j (t)|$ for $0 \leq j \leq N-1$ using the fact that the exponential factor is bounded by $1$ in these cases.  Indeed, a direct calculation using the explicit solution on time scale $T$ shows that for instance $|F_1| < \epsilon^2/8^2 (1- \epsilon t)^2$.  In turn, we can show
\[
|q_1 (t) | \leq \epsilon \delta ( \frac{1}{2^4} + \delta + 8 \delta^2) < \frac{ \epsilon \delta}{12}
\]
for $\delta$ chosen sufficiently small.  Even stronger estimates hold for $0 < j < N$.

A symmetric argument using the \eqref{qeq} for $q_N$ shows that
\[
|q_N (T) | \geq \frac{ \epsilon \delta}{16},
\]
by recognizing that $F_N \approx 4 \rho_N^2$ and $\sigma_N = 4 p_{N-1}$.  We will show in Section \ref{subsect:shock} from a different argument that $\rho_N$ is increasing in fact, but without such quantitative bounds as we are able to gain from \eqref{qeq}.

\begin{rem}
  \label{rem:rescale}

  By nature of the construction, we now have that
  \begin{equation*}
     \rho_0 (t) \leq p_j (t) + \left(\frac{\epsilon \delta}{12} \right).
  \end{equation*}
  Hence, near the end of our evolution the rarefaction wave at the
  left of the grid has size roughly
  \begin{equation*}
    p_0 (T) = \frac{\epsilon}{8} \frac{1}{1 + \epsilon T} \approx \frac{ \epsilon }{8}(1 - \delta),\end{equation*}
  which is less than the initial amplitude $\epsilon/8$ even when compared to the error.  Hence, we observe that what we have seen in Figures \ref{f:shockshort_compare}, namely that the rarefaction wave solution moves mass to the right initially in symmetric Burgers.  Similar estimates can be proven using the stronger bounds on $q_j$ for $j > 1$, though on the time scale $T$ most nodes have moved very little.  While we are not arguing this makes the rarefaction wave solution a good approximation for long times, we can observe both that the rarefaction wave initializes a motion of mass towards the right for the low modes.  Numerically, we observe that the rarefaction wave behavior is much more robust than we can fully understand at the moment.    
  \end{rem}

 \subsection{The Shock vs.\ Rarefaction in the Toy
   Model}\label{subsect:shock}
 Let us analyze the symmetric discrete Burgers equation in
 \eqref{e:burgers} with initial condition
 \begin{equation}
   \label{e:initdatasymburgers}
   \trho_j = 1 \ \text{for} \ 1 \leq j \leq 2 N, \ \ \trho_j = 0 \ \text{otherwise}.
 \end{equation}  
 Then, we observe that such an equation can be decomposed into a
 coupled system of equations for $s_j = \trho_{2j}$ and $r_{j} =
 \trho_{2j+1}$, which results in the system
 \begin{equation*}
   \dot s_j = -4 s_j (r_j - r_{j-1}) , \ \ 
   \dot r_j = -4 r_j (s_{j+1} - s_{j}).
 \end{equation*}
 Now, if we look at the right-most points, we observe
 \begin{equation*}
   \dot s_N = 4 s_N (r_{N-1}) , \ \ \dot r_{N-1} = -4 r_{N-1} (s_N - s_{N-1}),
 \end{equation*}
 which given that $r_j, s_j > 0$ for $1 \leq j \leq N$ implies that
 $s_N$ is an increasing function.  As a result, this implies that
 $r_{N-1}$ is a decreasing function.  Propagating this down the line
 interactions, we observe that the symmetric Burgers causes a
 splitting from the right endpoint instead of a shock moving right,
 see Figure \ref{f:symdiscburg} for a numerical simulation of this
 effect.  Note that the leading order component on the left is still
 that of a rarefaction wave however.  It is indeed this wave front we
 believe acts as the envelope for the Toy Model rarefaction wave,
 however as we do not have explicit control on its evolution, we focus
 on the rarefaction wave coming from the appropriate backwards Burgers
 evolution as in \eqref{e:dbackburgers}.

 \begin{figure}
   \includegraphics[width=7cm,type=pdf,ext=.pdf,read=.pdf]{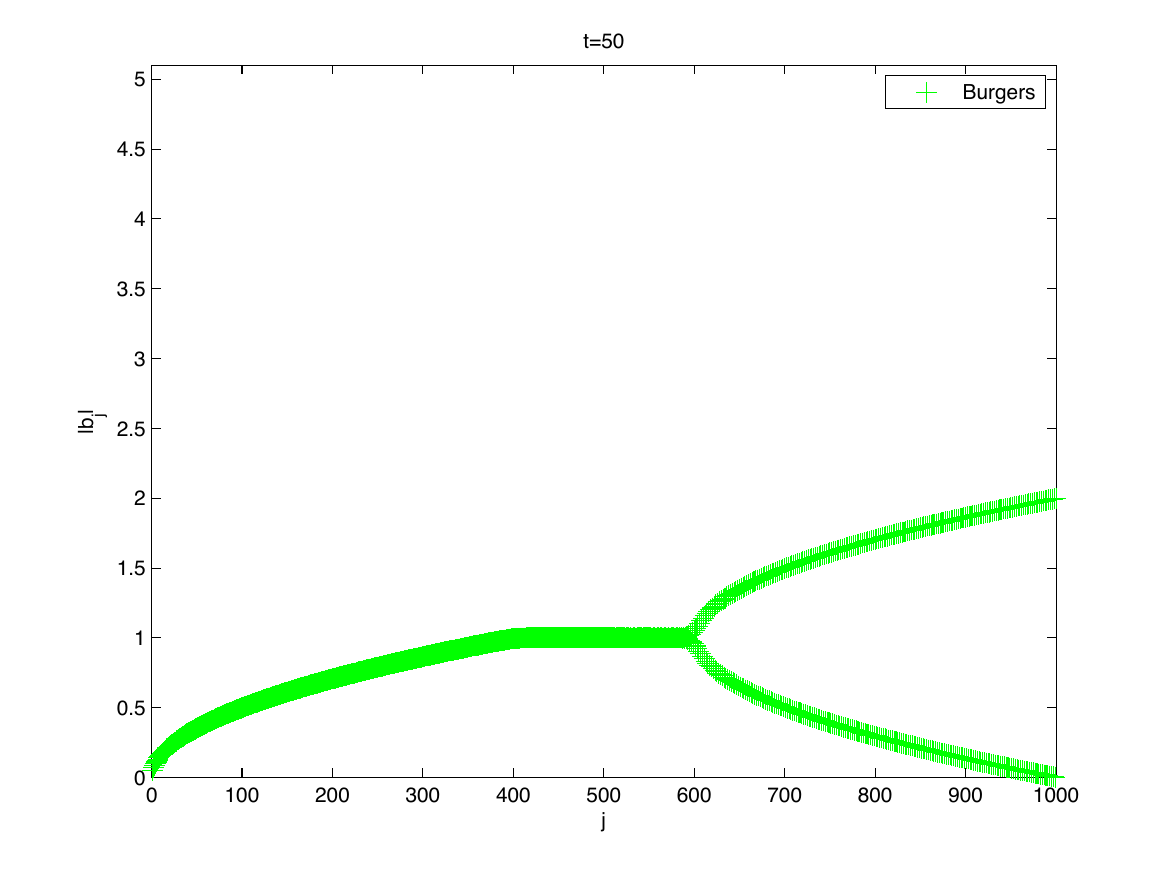}
   \caption{The left moving split that generates the dispersive shock
     from the symmetric Burgers evolution of
     \eqref{e:initdatasymburgers}.  This has been evolved to time
     $T=50.0$ on a lattice of size $N=1000$.}
   \label{f:symdiscburg}
 \end{figure}

 \section{Perturbation theory}
 \label{sect:pert}

\subsection{Equations for error terms}\label{subsect:lin}
Let us now fix a lattice with $N$ nodes and explicitly study equations
\eqref{e:hydroalt1} and \eqref{e:hydroalt2} with initial conditions
given by
\begin{equation}
  \label{e:pertid}
  \theta_j (0)  =  \frac{\pi}{4}, \ \ \rho_j (0) = \frac{\epsilon}{8}
\end{equation}
for all $j = 1, \dots, N$, and $0$ otherwise.

We wish to observe what sorts of error terms arise when we perturb
around the dynamics in \eqref{e:modburgers} in the full Toy Model.  To do this, we will first derive a new solution for 
\eqref{e:symburgers} as a perturbation of the explicit solutions explored above for \eqref{e:dbackburgers} in Section \ref{subsect:scaling}.

The difficulty here with directly using \eqref{e:modburgers} is that we must first approximate the existence of a rarefaction wave like solution and prove a priori estimates.  However, we can study the evolution of
\begin{align*}
  \hat{\theta}_j = \theta_j - \tilde{\theta}_j, \ \hat{\rho}_j =
  \rho_j - \tilde{\rho_j}
\end{align*}
by linearizing the $\sin$ and $\cos$ terms in
\eqref{e:hydroalt1}, \eqref{e:hydroalt2}. Before we proceed, recall the expansions
\begin{equation}
  \sin (\pi/2 + x) = 1 - \frac{x^2}{2!} - \dots, \; \cos (\pi/2 + x) = x - \frac{x^3}{3!} + \dots.
\end{equation}

Let us define
\begin{equation}
  \label{eqn:gammabddef}
  - \tgamma_j  =  \int_0^t (\trho_j - \trho_{j-1}) ds.
  \end{equation}
Note that there are much stronger and $j$ dependent bounds on the components stemming from the exact solution $\vec p (t)$,
\begin{align*}
 \int_0^t (p_j - p_{j-1}) ds, &  =  \frac18  \log \left(  \frac{  1 + \epsilon t  + \dots + \frac{1}{(j)!} (\epsilon t)^{j}}{(1 + \epsilon t  + \dots + \frac{1}{(j-1)!} (\epsilon t)^{j-1}) }  \right)   \\
  & =  \frac18  \log \left( 1 + \frac{ \frac{1}{(j)!} (\epsilon t)^{j}}{(1 + \epsilon t  + \dots + \frac{1}{(j-1)!} (\epsilon t)^{j-1}) }  \right)  \\
  & \leq \frac18 \log (1+\frac{ \epsilon t}{j} ) \leq
  \frac{\epsilon t}{ 8 j} \ \ \text{if $\epsilon t \leq 1$}.
\end{align*}
Then using $T = \delta \epsilon^{-1}$ and the corresponding bounds we have from Section \ref{subsect:gronw1}, one has the crude bound
  \begin{equation}
  \label{eqn:gammabd}
| \tgamma_j | \leq  \frac{ \delta}{8} +  2 T \frac{\epsilon \delta}{2}  \leq  \frac{ \delta}{8} + \delta^2.
  \end{equation}
Similarly, we define
\begin{align}
  \label{eqn:sigmabddef}
  \tsigma_j & = (\trho_j - \trho_{j-1}) ,
  \end{align}
  which then using the explicit properties of $\vec p (t)$ and bounds we have from Section \ref{subsect:gronw1}, we have the crude bound
\begin{align}
  \label{eqn:sigmabd}
  |\tsigma_j | &  \leq \frac{ \epsilon }{8 (j-1)!} + \epsilon \delta. 
  \end{align}
These definitions will allow us to simplify the process of collecting (generically small in amplitude) components of
our expansion.

By linearizing $\cos$ in \eqref{e:hydroalt1}, we observe that
\begin{align*}
  \dot{\htheta}_j & =  - (\hrho_j - \hrho_{j-1}) ( 1 + 2  \cos ( \pi/2 + 2 ( \htheta_j + \tgamma_j))) \notag \\
  &  \hspace{0.45cm}- 2 ( \trho_j - \trho_{j-1} ) \cos ( \pi/2 + 2 ( \htheta_j + \tgamma_j)) \notag \\
  &\hspace{0.45cm} + 2( \hrho_{j+1} + \trho_{j+1})  \cos ( \pi/2 + 2 ( \htheta_{j+1} + \tgamma_{j+1}))) \notag \\
  & \hspace{0.45cm}- 2 ( \hrho_{j-2} + \trho_{j-2})  \cos ( \pi/2 + 2 ( \htheta_{j-1} + \tgamma_{j-1})))  \notag \\
  & = - (\hrho_j - \hrho_{j-1})  + 4 [(\trho_j - \trho_{j-1}) + (\hrho_j - \hrho_{j-1})] (\tgamma_j  + \htheta_j) \notag  \\
  &\hspace{0.45cm}- 4 ( \trho_{j+1} + \hrho_{j+1}) (  \tgamma_{j+1} + \htheta_{j+1} )  + 4  ( \trho_{j-2} + \hrho_{j-2}) (  \tgamma_{j-1} + \htheta_{j-1} ) \notag  \\
  & \hspace{0.45cm} + \widetilde{E}_{1,j} ( \htheta + \tgamma,
  \hrho), \notag
\end{align*}
provided that
$
\hsigma_j = \hrho_j - \hrho_{j-1}
$
and
\begin{align*}
   \widetilde{E}_{1,j} ( \htheta + \tgamma,
  \hrho)  &= -2 \hsigma_j [  \cos ( \pi/2 + 2 ( \htheta_j + \tgamma_j))  +  2 ( \htheta_j + \tgamma_j)] \\
  & \hspace{0.45cm} -2 \tsigma_j [  \cos ( \pi/2 + 2 ( \htheta_j + \tgamma_j))  + 2 ( \htheta_j + \tgamma_j)] \\
  & \hspace{0.45cm} + 2 (\hrho_{j+1} + \trho_{j+1}) [  \cos ( \pi/2 + 2 ( \htheta_{j+1} + \tgamma_{j+1}))  +  2 ( \htheta_{j+1} + \tgamma_{j+1})] \\
  &  \hspace{0.45cm}- 2 (\hrho_{j-2} + \trho_{j-2}) [  \cos ( \pi/2 + 2 ( \htheta_{j-1} + \tgamma_{j-1}))  +  2 ( \htheta_{j-1} + \tgamma_{j-1})].
\end{align*}
Defining
\[
  E_{1,j} ( \htheta + \tgamma,
  \hrho)= 4 (\hrho_j - \hrho_{j-1}) \htheta_j - 4 \hrho_{j+1} \htheta_{j+1} + 4 \hrho_{j-2} \htheta_{j-1} 
+\widetilde{E}_{1,j} ( \htheta + \tgamma,
  \hrho),
\]
we obtain
\begin{align}
  \dot{\htheta}_j& =  - (\hrho_j - \hrho_{j-1}) (1 - 4 \tgamma_j) + 4 (\trho_j - \trho_{j-1}) \htheta_j \label{eqn:thetadot}   \\
  & \hspace{0.45cm} - 4\tgamma_{j+1} \hrho_{j+1} -4 \trho_{j+1} \htheta_{j+1}  + 4\tgamma_{j-1} \hrho_{j-2} + 4 \trho_{j-2} \htheta_{j-1} \notag \\
  &  \hspace{0.45cm}+ f_{1,j} (t) + E_{1,j} ( \htheta +
  \tgamma, \hrho) , \notag
\end{align}
where
\begin{align*}
  f_{1,j} (t) & = 4 \tsigma_j \tgamma_j - 4( \trho_{j+1} \tgamma_{j+1} - \trho_{j-2} \tgamma_{j-1}) \\
  & = 4 \tsigma_j \tgamma_j - 4 \trho_{j+1} (\tgamma_{j+1} - \tgamma_{j-1}) - 4\tgamma_{j-1} ( \trho_{j+1} - \trho_{j-2} ).
\end{align*}

Similarly, by linearizing $\sin$ in \eqref{e:hydroalt2} we obtain
\begin{align}
  \dot{\hrho}_j & =  -4 ( \hrho_j + \trho_j) (\hrho_{j+1} + \trho_{j+1})  \sin ( \pi/2 + 2 ( \htheta_{j+1} + \tgamma_{j+1}))) \notag \\
  & \hspace{.45cm} + 4 ( \hrho_j + \trho_j) (\hrho_{j-1} + \trho_{j-1})  \sin ( \pi/2 + 2 ( \htheta_{j} + \tgamma_{j}))) \notag \\
  & \hspace{.45cm} + 8 \tilde{\rho}_j \left( \tilde{\rho}_{j} - \tilde{\rho}_{j-1} \right) \notag \\
  &=   -4 \hrho_j ( \trho_{j+1} - \trho_{j-1}) - 4 \trho_{j} \hrho_{j+1} + 4 \trho_j \hrho_{j-1}  \label{eqn:rhodot}  \\
  & \hspace{.45cm} + E_{2,j} ( \htheta +
  \tgamma, \hrho) ,\notag  
\end{align}
using
\begin{align*}
  \widetilde{E}_{2,j}( \htheta + \tgamma,
  \hrho) & =   -4 ( \hrho_j + \trho_j) (\hrho_{j+1} + \trho_{j+1}) [ \sin ( \pi/2 + 2 ( \htheta_{j+1} + \tgamma_{j+1})) -1] \\
  & \hspace{.45cm} + 4 ( \hrho_j + \trho_j) (\hrho_{j-1} + \trho_{j-1})  [\sin ( \pi/2 + 2 ( \htheta_{j} + \tgamma_{j})) -1]  
\end{align*}
and
\[
E_{2,j}( \htheta + \tgamma,
  \hrho)=4\hrho_j\hrho_{j-1}-4\hrho_j\hrho_{j+1}+\widetilde{E}_{2,j}( \htheta + \tgamma,
  \hrho).
\]

At this stage, we set out to explore in what sense a rarefaction
wave-like solution from the backward Burgers equation approximates the
solution to
\begin{align*}
  \dot\theta_j & =  -(\rho_j-\rho_{j-1}) - 2 ( \rho_j - \rho_{j-1}) \cos(2 \theta_j) \\
  &\hspace{0.45cm}+ 2 \rho_{j+1} \cos(2 \theta_{j+1})  - 2 \rho_{j-2} \cos(2 \theta_{j-1}) ,  \notag \\
  \dot\rho_j & = 4 \rho_j \rho_{j-1} \sin(2\theta_j) -4 \rho_j
  \rho_{j+1} \sin(2 \theta_{j+1}). \label{e:hydroalt2}
\end{align*}
with initial data
\begin{equation*}
  \theta_j (0)  =  \frac{\pi}{4}, \ \ \rho_j (0) = \frac{\epsilon}{8}
\end{equation*}
for all $j = 1, \dots, N$, and $0$ otherwise.

\subsection{A Gronwall Estimate}\label{subsect:gronw}
Multiplying \eqref{eqn:thetadot} by $\htheta_j$ and \eqref{eqn:rhodot}
by $\hrho_j$, we obtain
\begin{align}
  \label{eqn:diff-ineq1}
 & \frac12  \frac{d}{dt} \| \htheta \|_{\ell^\infty}^2 \leq 12 \| \trho
  \|_{\ell^\infty} \| \htheta \|_{\ell^\infty}^2 +  8 \| \tgamma \|_{\ell^\infty} \| \hrho
  \|_{\ell^\infty} \| \htheta \|_{\ell^\infty} \\
&\notag + 2 \| \hrho \|_{\ell^\infty} \| 1 - 4 \tgamma \|_{\ell^\infty}   \| \htheta \|_{\ell^\infty}  + \| f_{1}
  \|_{\ell^\infty} \| \htheta \|_{\ell^\infty} 
+ \|E_1( \htheta + \tgamma,
  \hrho) \|_{\ell^\infty} \| \htheta
  \|_{\ell^\infty}, \\\label{eqn:diff-ineq2}
  & \frac12 \frac{d}{dt}  \| \hrho \|_{\ell^\infty}^2  \leq 12\| \trho \|_{\ell^\infty} \| \hrho \|_{\ell^\infty}^2 +   \| E_2 ( \htheta + \tgamma,
  \hrho)\|_{\ell^\infty} \| \hrho
  \|_{\ell^\infty},
\end{align}
uniformly on the time interval $[0, \delta \epsilon^{-1}]$.

The following lemma contains the crucial Gronwall estimates. Applying the relations 
$ab\leq T a^2+ \frac{1}{4T}
b^2$ in the $\htheta$ equations and $ab\leq 2T a^2+ \frac{1}{2T}
b^2$ in the $\hrho$ equations for $T > 0$, we can write down crude bounds directly from
immediate consequence of \eqref{eqn:diff-ineq1} and
\eqref{eqn:diff-ineq2}.  On the time scale $T = \delta \epsilon^{-1}$, we have from above that 
\[ \trho_j \leq \frac{\epsilon}{8} + \frac{\epsilon \delta}{2} \]
and
\[ \tgamma_j \leq \frac{\delta}{8} + \delta^2.  \]

\begin{lem}
  For all $0< t \leq T = \delta \epsilon^{-1}$ we have
  \begin{equation}
    \label{eqn:bootstrap1}
\begin{split}
 &   \| \htheta (t)\|_{\ell^\infty}^2 \leq  C_1 e^{24 \int_0^t \|\trho\|_{\ell^\infty} ds} I_1(t),\qquad \text{ where }\\
&I_1(t)= \left( \int_0^t  (16 \| \tgamma \|_{\ell^\infty} + 2 \| 1 - 4 \tgamma \|_{\ell^\infty}) T \| \hrho  \|_{\ell^\infty}^2 +   2T \|f_1\|_{\ell^\infty}^2 +   2T \|E_1( \htheta + \tgamma,
  \hrho) \|_{\ell^\infty}^2 ds \right) \\
& \hspace{.4cm} \leq  \left( \int_0^t  ( 2 + 3 \delta + 24 \delta^2) T \| \hrho  \|_{\ell^\infty}^2 +   2T \|f_1\|_{\ell^\infty}^2 +   2T \|E_1( \htheta + \tgamma,
  \hrho) \|_{\ell^\infty}^2 ds \right) ,
\end{split}
\end{equation}
and
  \begin{equation}
    \label{eqn:bootstrap2}\begin{split}
    & \| \hrho (t)\|_{\ell^\infty}^2 \leq C_2 e^{24 \int_0^t \|\trho\|_{\ell^\infty} ds} I_2(t),\qquad \text{ where }\\
&I_2(t)= \left( \int_0^t  T
      \|E_2( \htheta + \tgamma,
  \hrho)\|_{\ell^\infty}^2 ds \right).
\end{split}
\end{equation}
We can then bound the constants by
\[  C_1 = e^{ \frac{1}{2T} \int_0^t ( 8 \| \tgamma \|_{\ell^\infty} + \| 1-4 \tgamma \|_{\ell^\infty} + 2 ) ds}   \leq e^{ \frac32 + \frac32 \delta  + 6 \delta^2}  \leq 8,\] 
\[ C_2 =  e^{1} < 3 ,\]
and 
\[   2  + 3\delta + 24 \delta^2 \leq 3 \]
for $\delta$ sufficiently small.  
\end{lem}

\subsection{Main result}\label{subsect:main}
We proceed with a bootstrap argument to prove uniform bounds for
$(\htheta, \hrho)$.  We will control the error terms with respect to
the parameter $\epsilon$ for a grid of size $N$ up to a time $T =
\epsilon^{-1}$.

\begin{thm}\label{thm:main1alt}
  There exists an $1/16 > \delta > 0$, such that for any $0 < \epsilon$ sufficiently small, $N > 0$, given initial data \eqref{e:pertid} depending upon
  $\epsilon$ for equations \eqref{e:hydroalt1}-\eqref{e:hydroalt2},
  the solution $(\theta,\rho)$ to \eqref{e:hydroalt1},
  \eqref{e:hydroalt2} satisfies
$$\| \theta - \ttheta \|_{L^\infty \ell^\infty } \leq \frac{\delta}{8} , \ \  \|\rho- \trho \|_{L^\infty \ell^\infty} \leq   \frac{\delta \epsilon}{32} $$
for all $0 \leq t \leq T = \delta \epsilon^{-1} $.  Here, $(\ttheta
(t), \trho (t))$ satisfy
\eqref{e:modburgers} with initial data \eqref{e:pertid}. \end{thm}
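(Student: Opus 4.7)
The plan is a continuity (bootstrap) argument on the perturbation system \eqref{eqn:thetadot}--\eqref{eqn:rhodot} using the Gronwall estimates of the preceding lemma with the parameter choice $T = \delta\epsilon^{-1}$. Define
\[
T^* = \sup\set{t \in [0, \delta\epsilon^{-1}] : \|\htheta(s)\|_{\ell^\infty} \leq \tfrac{\delta}{8},\ \|\hrho(s)\|_{\ell^\infty} \leq \tfrac{\delta \epsilon}{12}\text{ for all }s \in [0,t]},
\]
which is strictly positive by continuity since $\htheta(0) = \hrho(0) = 0$. The goal is to show that on $[0, T^*]$ the bootstrap hypothesis actually forces \emph{strictly} better bounds, so that $T^* = \delta\epsilon^{-1}$ follows by continuity.

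\emph{Explicit bounds on the driving terms.} For $t \leq \delta\epsilon^{-1}$ we have $\epsilon t \leq 1$, so \eqref{eqn:gammabd} and \eqref{eqn:sigmabd} yield $\|\trho\|_{\ell^\infty} \leq \epsilon/8$, $\|\tgamma\|_{\ell^\infty} \leq \delta/8$, $\|\tsigma\|_{\ell^\infty} \leq \epsilon/8$. Substituting into the definitions of $f_1$ and $f_2$ gives $\|f_1(t)\|_{\ell^\infty} \leq C \epsilon^2 t$ and $\|f_2(t)\|_{\ell^\infty} \leq C\epsilon^2$; the uniform bound on $f_2$ is the dominant forcing, encoding the mismatch between the symmetric dynamics of \eqref{e:hydroalt2} and the one-sided backward Burgers driving $\trho$. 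Under the bootstrap, $|\htheta_j + \tgamma_j| \leq \delta/4$ and $|\hrho_j|+|\trho_j| \leq \epsilon/4$, and feeding these into the explicit formulas yields
\[
\|(h.o.t.)_1\|_{\ell^\infty} \leq C\paren{\|\hrho\|_{\ell^\infty}\|\htheta\|_{\ell^\infty} + \epsilon \delta^3} \leq C\delta^2 \epsilon,\qquad \|(h.o.t.)_2\|_{\ell^\infty} \leq C\epsilon^2 \delta^2.
\]

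\emph{Closure via Gronwall.} With $T = \delta\epsilon^{-1}$ the exponential in \eqref{eqn:bootstrap2} is bounded by $e^{\delta/4}$ since $\int_0^t \trho(1+\tgamma^2)\,ds \leq \delta/8 + O(\delta^3)$. Combining with the estimates above,
\[
T\int_0^t \paren{\|f_2\|_{\ell^\infty}^2 + \|(h.o.t.)_2\|_{\ell^\infty}^2}ds \leq CT^2\epsilon^4(1+\delta^4) \leq C\delta^2\epsilon^2,
\]
so $\|\hrho(t)\|_{\ell^\infty} \leq C_2 \delta \epsilon$; likewise from \eqref{eqn:bootstrap1},
\[
T\int_0^t \paren{\|\hrho\|_{\ell^\infty}^2 + \|f_1\|_{\ell^\infty}^2 + \|(h.o.t.)_1\|_{\ell^\infty}^2}ds \leq CT^2(\delta^2\epsilon^2 + \epsilon^4 T^2 + \delta^4\epsilon^2) \leq C\delta^4,
\]
so $\|\htheta(t)\|_{\ell^\infty} \leq C_1 \delta^2$. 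Since $C_1\delta^2 < \delta/8$ for $\delta$ sufficiently small, the $\htheta$ bound closes with room to spare; using the sharp constant $C \leq e^{1/2}/2$ from the lemma together with $\delta$ small enough to absorb remaining implicit multiplicative factors forces $C_2 < 1/12$, closing the $\hrho$ bound as well. Hence both bootstrap inequalities are strict at $t = T^*$, and $T^* = \delta\epsilon^{-1}$.

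The main obstacle is the $\hrho$ bound: the forcing $f_2$ sits irreducibly at scale $\epsilon^2$ and does \emph{not} shrink with $\delta$, so the target $\|\hrho\|_{\ell^\infty} \leq \delta\epsilon/12$ is achievable only through the precise balance between the horizon $T = \delta\epsilon^{-1}$ and the sharp Gronwall constants — effectively, one cannot push past $T \sim \epsilon^{-1}$ without losing smallness in $\hrho$, which explains the sharp scaling of the theorem. Arranging $\delta$ small enough to meet both explicit numerical thresholds simultaneously, while also keeping the cross-term $\hrho\,\htheta$ appearing in $(h.o.t.)_1$ strictly subleading, is the remaining (routine but delicate) bookkeeping hurdle.
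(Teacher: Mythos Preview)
Your proposal is correct and follows essentially the same bootstrap/continuity argument as the paper, with the same parameter choice $T=\delta\epsilon^{-1}$ and the same use of the Gronwall lemma. The ``routine but delicate bookkeeping'' you defer is precisely what the paper carries out: since, as you correctly observe, the $f_2$ forcing sits at scale $\epsilon^2$ and cannot be made small by taking $\delta$ small, one must track the explicit constant $\|f_2\|_{\ell^\infty}\leq \epsilon^2/16$ and verify numerically that $\tfrac{e^{1/2}}{2}\cdot\tfrac{12^2}{2^8}<1$, which indeed holds.
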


\begin{proof}
  Let us take a $0< \delta < 1/16$ to be chosen later.  We define $B_{r}
  (0)$ denote the closed ball of radius $r$ centered at $0$ in
  $L^\infty([0,T]; \ell^\infty)$, and hence take the bootstrap
  assumption to be that
$$(\htheta,\hrho) \in B_{ \frac{\delta}{8}} (0)\times B_{ \frac{\delta \epsilon}{32}} (0)$$ 
for all time for a given $\delta$ to be chosen later. In other words,
we will assume the following bounds
\begin{equation}
  \label{eqn:gronwallbootstrapalt}
  \| ( \htheta )\|_{L^\infty_t \ell^\infty} \leq \frac{\delta}{8} , \ \ \| ( \hrho )\|_{L^\infty_t \ell^\infty} \leq \frac{\delta  \epsilon}{32}.
\end{equation}

Let us note here that similar to the computation in
\eqref{eqn:gammabd}, we have
\begin{equation}
  \label{eqn:expgronbdalt}
  e^{24 \int_0^T \| \trho_j \|_{\ell^\infty} (s) ds}  \leq
 e^{  24 T \epsilon \left( \frac{1}{8}  + \frac{\delta}{2} \right) } \leq e^{4 \delta}
\end{equation}
for $T \leq \delta \epsilon^{-1}$.  Due to the bootstrap assumption we
have for $t \leq \delta \epsilon^{-1}$ the bound
$$\max_{j }|\htheta_j|+|\tgamma_j| \leq  \frac{2 \delta}{8} + \delta^2 \leq \frac{1}{2^4}$$
with $\delta$ chosen sufficiently small. 
Hence, using $\epsilon \ll \delta < 1$, we can write the explicit bounds
\begin{align*}
& |E_{1,j}| \leq 16 \| \hrho \|_{L^\infty} \| \htheta \|_{L^\infty} + | \tilde{E}_{1,j}| < \frac{\epsilon \delta^2}{16} + 8 \| \hrho \|_{L^\infty}  \frac{4}{3} \| \htheta + \tgamma \|^3_{L^\infty}  + 8 \| \trho \|_{L^\infty}  \frac{4}{3} \| \htheta + \tgamma \|^3_{L^\infty} \\
& \hspace{1.5cm}  < \frac{\epsilon \delta^2}{16} + \frac{\epsilon \delta}{3} \left(  \frac{  \delta}{ 2^2} + \delta^2 \right)^3 + 8 \left( \frac{\epsilon}{8} + \frac{\epsilon \delta}{2} \right)   \frac{4}{3}  \left(  \frac{  \delta}{ 2^2} + \delta^2 \right)^3 \\
& \hspace{2cm} <  \frac{\epsilon \delta^2}{16}  +  4 \epsilon \left(  \frac{  \delta}{ 2^2} + \delta^2 \right)^3  < \frac{3\epsilon \delta^2}{2^4}, \\
& |(f)_{1,j} | \leq  16 \| \trho \|_{L^\infty} \| \tgamma \|_{L^\infty} <  16 \left( \frac{\epsilon}{8} + \epsilon \delta \right)  \left( \frac{\delta}{8} + \delta^2 \right) \\
& \hspace{1cm} \leq  \epsilon \left( \frac{ \delta}{4} + 4 \delta^2 + 16 \delta^3 \right)
\end{align*}
and
\begin{align*}
& |E_{2,j}|  \leq 8 \| \hrho \|_{L^\infty}^2 + | \tilde{E}_{2,j}| <  \frac{\delta^2 \epsilon^2}{2^7} +   \frac{ \epsilon^2}{4} \left( \frac{ \delta  }{2^2} + \delta^2 \right)^2  < \frac{3\delta^2 \epsilon^2}{2^7}
\end{align*}
for $\epsilon$ sufficiently small.
Here, we have taken the coefficients large enough to dominate algebraic contributions of each term in the expansions for $f_{1,2}$ and $E_{1,2}$ with the contribution from $\tilde E_{1,2}$ doubled in order to bound all higher
order terms by twice the worst bound on those of lowest order.
Careful control of such error terms will most definitely allow for somewhat
sharper bounds, however these terms are much lower order compared to
boundary effects, so we do not work carefully to optimize them.

Thus, we deduce that there exists $C > 0$ (can be fixed uniformly in
$\epsilon$ for $\delta$ small) such that
\begin{align*}
  |\text{r.h.s.\ of }\eqref{eqn:bootstrap1}|  & \leq  8  T^2 \epsilon^2 \left[  \frac{3}{16^2} \delta^2 + 2\left( \frac{ \delta}{4} + 4 \delta^2 + 16 \delta^3 \right)^2 + 2 \frac{9\delta^4}{2^8} \right] \\
  & \hspace{1cm} \leq C \delta^4 < \frac{\delta^2}{64}
  \end{align*}
and
\begin{align*}
  |\text{r.h.s.\ of }\eqref{eqn:bootstrap2}| & \leq 3 T^2 \epsilon^2 [ \frac{9 \delta^4 \epsilon^2}{2^{14}}  ]  \\
  & \hspace{.5cm} \leq \frac{\epsilon^2 \delta^2 }{32^2} 
    (2 \delta^4) <  \frac{
    \delta^2 \epsilon^2}{32^2}
\end{align*}
for $ \delta$ sufficiently small.
Note that $\delta$ is chosen independently of
$\epsilon$ and $N$.  As a result, we can close the bootstrapping
argument in $\htheta$ and $\hrho$ independent of our choice of lattice size $N$ and any initial
step size $\epsilon/8$. 
\end{proof}

\begin{rem}
  \label{rem:rescale2}
  By nature of the construction, we now have that
  \begin{equation*}
    |b_j|^2 (t) = \rho_j (t) \leq \trho_j (t) + \left(\frac{\epsilon \delta}{32} \right).
  \end{equation*}
  Hence, using the envelope estimates from Remark \ref{rem:rescale} for the explicit rarefaction wave in Section \ref{subsect:gronw1} and Theorem \ref{thm:main1alt}, near the end of our evolution the rarefaction wave at the
  left of the grid has size roughly bounded
  \begin{equation*}
   | \rho_0 (T)| < \frac{\epsilon}{8}  - \frac{\epsilon\delta}{10},\end{equation*}
  which is less than the initial amplitude $\epsilon/8$ even when compared to the error.   A symmetric argument using the growth of $\rho_N$ shows that it has increased by a non-trivial amount.
 Hence, we observe that our method moves mass towards the right and given that the symmetric Burgers solution increases at the right endpoint, the Toy Model initially does as well.  Obviously we would like a much stronger proof of mass transfer by rarefaction wave dynamics, which remains an open problem relating to the global structure of the mass transfer in the full Toy Model.  
  
\end{rem}

\section{Additional Observations and Remarks}
\label{sect:rem}

\subsection{Flux Computation for Finite Approximations}
\label{sec:flux}

We have the Hamiltonian system
\begin{equation}
  \left\{ \begin{array}{c}
      -i \partial_t b_j  = -| b_j|^2 b_j + 2 b_{j-1}^2 \bar{b}_j + 2 b_{j+1}^2 \bar{b}_j, \\ 
      i \partial_t \bar{b}_j  = -| \bar{b}_j|^2 \bar{b}_j + 2 \bar{b}_{j-1}^2 b_j + 2 \bar{b}_{j+1}^2 b_j,
    \end{array} \right.
\end{equation}
which is only Hamiltonian with respect to the infinite sum unless we
are certain that our initial data is compactly supported.  However, as
suggested to us by Jonathan Mattingly \cite{matt-pc} based off of
ideas in \cite{MSV-cascade}, let us take initial data supported on the
infinite half lattice, yet restrict the Hamiltonian system to the
first $N$ nodes and simply look at the flux in the energy at this
sufficiently high node, where we now have
\begin{equation}
  H_N = \sum_{j=1}^N \frac14 |b_j|^4 - \Re ( \bar{b}_j^2 b_{j-1}^2),
\end{equation}
which now is not perfectly conserved.  Indeed, we have
\begin{equation}
  \partial_t H_N = 2 |b_N|^2 \Im ( 2 b_{N+1}^2 \bar{b}_{N-1}^2 -   b_{N+1}^2 \bar{b}_{N}^2).
\end{equation}

Moving from the exact formula to do some asymptotic analysis, if we
assume roughly comparable amplitude (Note: we believe is the case at
say $j = \frac{N}{2}$ up to the time the rarefaction wave and
dispersive shock meet) of the final three nodes gives
\begin{equation}
  \partial_t H_N \approx  A_N^6 [4 \sin (2 \phi_{N+1} -2 \phi_{N-1}) - 2 \sin (2 \phi_{N+1} -2 \phi_{N}) ]
\end{equation}
taking $b_j= A_j e^{i \phi_j}$.  Hence, the Hamiltonian flux is seen
to be positive (inward flow of energy) if
\begin{equation} [ \sin (2 \phi_{N+1} -2 \phi_{N-1}) - \frac12 \sin (2
  \phi_{N+1} -2\phi_{N}) ] > 0.
\end{equation}
In order to see outward flow of energy, the Hamiltonian flux is
negative if
\begin{equation} [ \sin (2 \phi_{N+1} - 2\phi_{N-1}) - \frac12 \sin (2
  \phi_{N+1} - 2 \phi_{N}) ] < 0.
\end{equation}
For $\phi_{j} = \frac{(j-1) \pi}{4}$, we observe
\begin{equation} [ \sin (2 \phi_{N+1} - 2\phi_{N-1}) - \frac12 \sin (2
  \phi_{N+1} - 2 \phi_{N}) ] = -3/2 ,
\end{equation}
which would actually result in an outward flow of energy, though as
the Hamiltonian energy is not coercive, we do not gain much from this
computation.

Alternatively, we look at the restricted mass flux,
\begin{equation}
  M_N = \sum_{j=1}^N |b_j|^2,
\end{equation}
which now is not perfectly conserved.  Indeed, we have
\begin{equation}
  \partial_t M_N = -4  \Im ( b_{N+1}^2 \bar{b}_N^2).
\end{equation}
Making a similar asymptotic assumption at the endpoint, we have
\begin{equation}
  \partial_t M_N \approx  -4  |A_N|^4 \sin ( 2 (\phi_{N+1} - \phi_N)).
\end{equation}
Hence, we observe that the mass flux is {\it outgoing} for $\phi_{j} =
\frac{(j-1) \pi}{4}$ since we then have
\begin{equation}
  \partial_t M_N \approx  -4  |A_N|^4 \sin ( \frac{\pi}{2}).
\end{equation}

\begin{rem}
  \label{rem:massflux}
  As pointed out by the anonymous referee, we can use the mass flux
  computation from Section \ref{sec:flux} to study the rarefaction
  wave solution from Theorem \ref{thm:main1alt}.  In such a case, we
  observe that for say the node $j=N/2$, which remains roughly fixed
  at $\rho_j (t) \sim \epsilon/8$ and $\theta_j (t) \sim \pi/4$, we
  have the total mass moved across this node of order
$$ \epsilon^{-1} \epsilon^4 = \epsilon^3.$$
Initially, this looks like a rather small mass flux compared to the
size of the solution overall.  However, we note first of all that in
this setting $\epsilon$ need not necessarily be extremely small since
the asymptotic methods are are done mostly on the side of the $\delta$
parameter. In addition, we note that numerically of course, the
mid-point of the rarefaction wave solution remains stable much longer
than the time scales we have controlled here.  Indeed, while the
rarefaction wave moving left and the dispersive shock like solution
moving right definitely change the structure of the backwards Burgers
equation on a time scale of order $\epsilon^{-1}$, we have strong
numerical evidence that away from the fronts of those waves the
solution remains largely unchanged.  Hence, we expect that with
greater global control over the dynamics, the mass flux computation
can be shown to be much stronger than can be applied on the time
scales in Theorem \ref{thm:main1alt}.
\end{rem}

\begin{rem}
  \label{rem:rescale1}

  As commented in \cite{CMOS1}, this analysis still leads to open
  questions about Sobolev norm growth in the full problem
  \eqref{e:dcnls} given the pointwise bounds on the error for $j \sim
  N/2$ on the same time scale, we observe that the flux computation in
  Section \ref{sec:flux} will continue moving mass towards high $j$ on
  this time scale.  In addition, computational checks of the constants
  suggest that the bootstrapping arguments in Theorem
  \ref{thm:main1alt} appear to go through for $\delta$ chosen even as
  large as $1/2$, meaning while we need our time interval to be
  $o(1)$, there should be parts of the argument that extend to time
  $1$.  Doing so in a rigorous fashion likely requires more analytic
  control on the global structure of the rarefaction wave-like
  solution both in discrete symmetric Burgers and in terms of the
  behavior in the Toy Model near the right boundary.
\end{rem}

\subsection{An observation about $\| \cdot \|_{\ell^2}$ growth of
  $\hrho$, $\htheta$}\label{subsect:obs}

We present here an illustrative computation, which unfortunately at
the moment we cannot apply in a perturbation theoretic argument as we
would require stronger control of the behavior of solutions to
\eqref{e:toy_model} at the endpoints of our finite region.  Let us
recall that
\begin{align*}
  \dot{\hrho}_j = 4\hrho_{j-1} \trho_j - 4 \hrho_j (\trho_{j+1} -
  \trho_{j-1} ) - 4\hrho_{j+1}\trho_j+ F_j,
\end{align*}
with \[F_j=2 \hrho_{j-1} \trho_{j} \tgamma_j^2 - 2\hrho_{j+1}\trho_{j}
\tgamma_{j+1}^2 + (f)_{2,j} (t) + \mathcal{O} (|\hrho_j + \trho_j|^2
|\htheta_j + \tgamma_j|^2).\] Combining terms from nearest neighbors
we have
\begin{align*}
  &\tfrac12 \partial_t {( \hat{\rho}_{j-1}^2)} + \tfrac12 \partial_t
  {(
    \hat{\rho}_{j}^2)} + \tfrac12 \partial_t { (\hat{\rho}_{j+1}^2 )}\\
  & \hspace{.5cm}  = 4\hrho_{j-2}\hrho_{j-1}\trho_{j-1}-4\hrho_{j-1}^2(\trho_j-\trho_{j-2})-4\hrho_{j}\hrho_{j-1}\trho_{j-1}+F_{j-1}\hrho_{j-1}\\
  &\hspace{1.0cm} +4\hrho_{j-1}\hrho_{j}\trho_{j}-4\hrho_{j}^2 (\trho_{j+1}-\trho_{j-1}) -4\hrho_{j+1}\hrho_{j}\trho_{j}+F_{j}\hrho_{j}\\
  & \hspace{1.5cm} +4\hrho_{j}\hrho_{j+1}\trho_{j+1}-4\hrho_{j+1}^2
  (\trho_{j+2}-\trho_{j})
  -4\hrho_{j+2}\hrho_{j+1}\trho_{j+1}+F_{j+1}\hrho_{j+1}.
\end{align*}
By combining nearby terms, we observe that
\begin{align*}
  &-4\hrho_{j-1}^2(\trho_j-\trho_{j-2})-4\hrho_{j}\hrho_{j-1}\trho_{j-1}+4\hrho_{j-1}\hrho_{j}\trho_{j}-4\hrho_{j}^2
  (\trho_{j+1}-\trho_{j-1}) \\
  & \hspace{1cm} -4\hrho_{j+1}\hrho_{j}\trho_{j} +4\hrho_{j}\hrho_{j+1}\trho_{j+1}-4\hrho_{j+1}^2 (\trho_{j+2}-\trho_{j})\\
  & \hspace{.5cm} = -4\hrho_{j-1}^2(\trho_j-\trho_{j-2}) +4
  \hat{\rho}_j \hat{\rho}_{j-1} ( \tilde{\rho}_{j} -
  \tilde{\rho}_{j-1})
  - 4 \hat{\rho}_j^2( \tilde{\rho}_{j+1} - \tilde{\rho}_{j-1}) \\
  & \hspace{1.75cm}+ 4\hat{\rho}_j \hat{\rho}_{j+1}(
  \tilde{\rho}_{j+1} - \tilde{\rho}_{j})
  -4\hrho_{j+1}^2 (\trho_{j+2}-\trho_{j})\\
  & \hspace{1cm} \leq - 2 \hat{\rho}_{j-1}^2 ( \tilde{\rho}_{j} -
  \tilde{\rho}_{j-2})
  -2 \hat{\rho}_j^2 ( \tilde{\rho}_{j+1} - \tilde{\rho}_{j-1}) - 2 \hat{\rho}_{j+1}^2 ( \tilde{\rho}_{j+2} - \tilde{\rho}_{j}) \\
  & \hspace{2cm} \leq 0
\end{align*}
by Cauchy-Schwarz. Summation yields
\[
\tfrac12 \partial_t \sum_{j=1}^{N} \hat{\rho}_{j}^2 \leq 2 \hrho_N^2 (
\trho_N + \trho_{N-1} ) +\sum_{j=1}^{N} F_{j}\hrho_{j}
\]
since the $\hrho$ terms are compactly supported on the interval $j =
1, \dots N$.  Hence, we observe that on the time scale of evolution,
the leading order terms here involve only the right endpoints and are
by construction positive, which fits with the conservation of mass in
\eqref{e:toy_model}.

\subsection{Other Discrete Conservation Laws}\label{subsect:other}
In \cite{HerrmannRademacher}, the authors study Fermi-Pasta-Ulam
Systems of the form
\begin{equation*}
  \dot{r}_j = v_{j+1} - v_j, \\
  \dot{v}_j = \phi' (r_j) - \phi' ( r_{j-1}),
\end{equation*}
where $\phi (r) = e^{1-r} - (1-r)$.  Continuous limits $(r_j, v_j) (t)
= (r,v) (\epsilon t, \epsilon j)$ of such models also satisfy the
Burgers equation
\begin{equation*}
  \partial_t r - \partial_x v = 0, \quad \partial_t v - \partial_x \phi' (r) = 0.
\end{equation*}
Rarefaction waves and dispersive shocks are observed and studied in
depth numerically and some analysis is done on conservative shock
formation.  In future work, we hope similar analysis can be done to
that for \eqref{e:toy_model}.

\section{Numerical Study of Rarefaction and Relationship to the Toda
  Lattice}
  \label{s:numerics}

Following a suggestion of the referee, the authors explored references
\cite{GoodmanLax,GHSZ-Toda,KvM} relating symmetric Burgers 
\eqref{e:burgers} and the Toda Lattice, a known integrable system,
see also the survey article \cite{teschl2001almost}.  The key idea is
that $\sqrt{\rho_j} (t/4)= a_j (t)$, the system becomes the Kac-van
Moerbeke system (KvM)
\begin{equation}
  \label{KvM}
  \dot a_j = - a_j (a_{j+1}^2 - a_{j-1}^2)
\end{equation}
and this has a direct connection via its even and odd modes to the
Toda Lattice by interpreting the $a_j$'s as entries in a Jacobi
Matrix.  In \cite{GHSZ-Toda}, they construct the algebro-geometric
inverse of the Toda Solitons to the KvM system, which translates in
discrete Burgers to solutions on all of $\Z$ of the form
\begin{equation}
  \label{Toda_1}
 \rho_j (z, t) = \gamma \left\{  \begin{array}{l} 
      \frac{  1 + c(t)^2 z^{2m + 2} ( 1 - z^2)^{-1} }{  2 z ( 1 + c (t)^2 z^{2m} (1-z^2)^{-1} }, \ \ j = 2m,  \\
      \frac{  z ( 1 + c(t)^2 z^{2m} (1 - z^2)^{-1} ) }{  2 ( 1 + c(t)^2 z^{2m + 2} ( 1- z^2)^{-1} }, \ \ j = 2m + 1, \end{array} \right.
\end{equation}
where
\[ c(t) = c_0 e^{ \frac{(z - z^{-1}) t }{2}}, \] where the constant
$\gamma$ determines the total amplitude and partially the time of
evolution compared to the (KvM) system.  The rarefaction wave can be
seen as the setting where $z = 1$.  Otherwise, this solution gives
interesting splitting of the even and odd modes that we will show
remains somewhat stable in the toy model.

 \begin{figure}
   \subfigure{\includegraphics[width=4cm,type=pdf,ext=.pdf,read=.pdf]{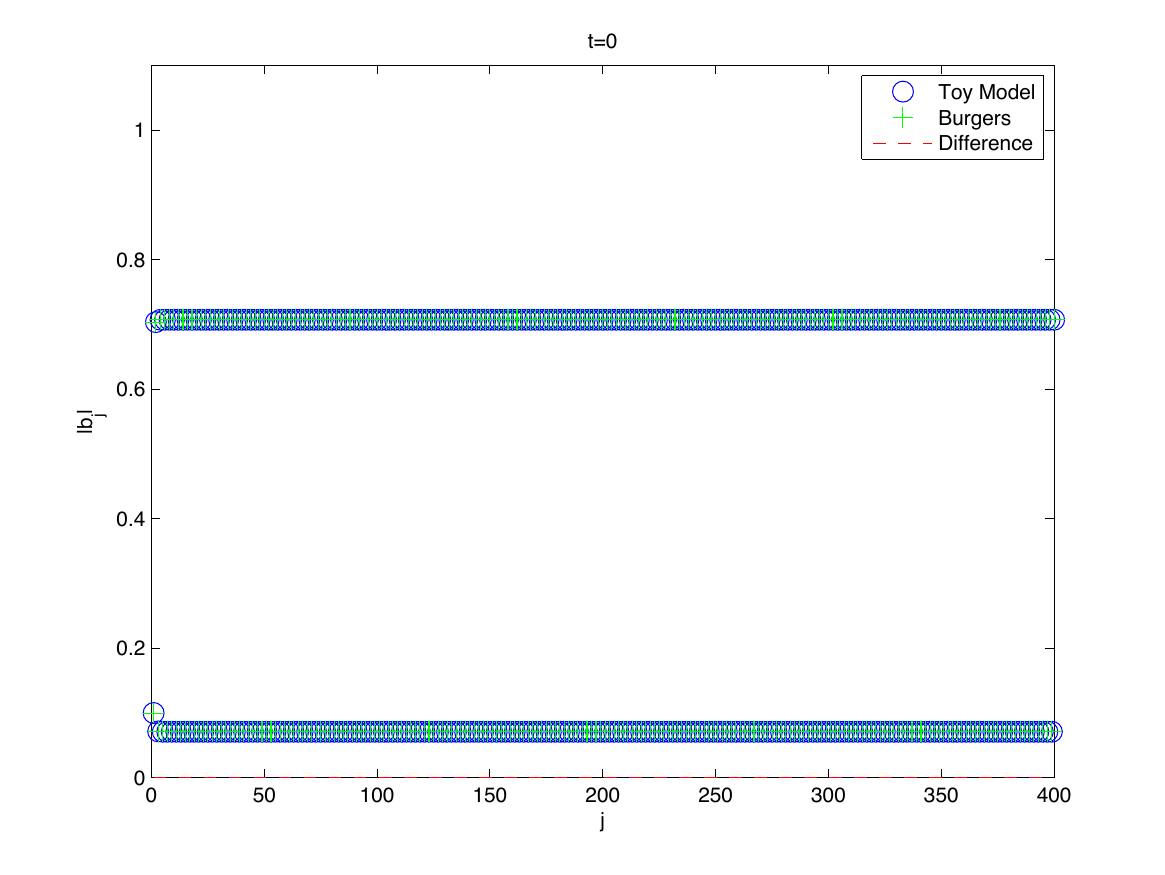}}
   \subfigure{\includegraphics[width=4cm,type=pdf,ext=.pdf,read=.pdf]{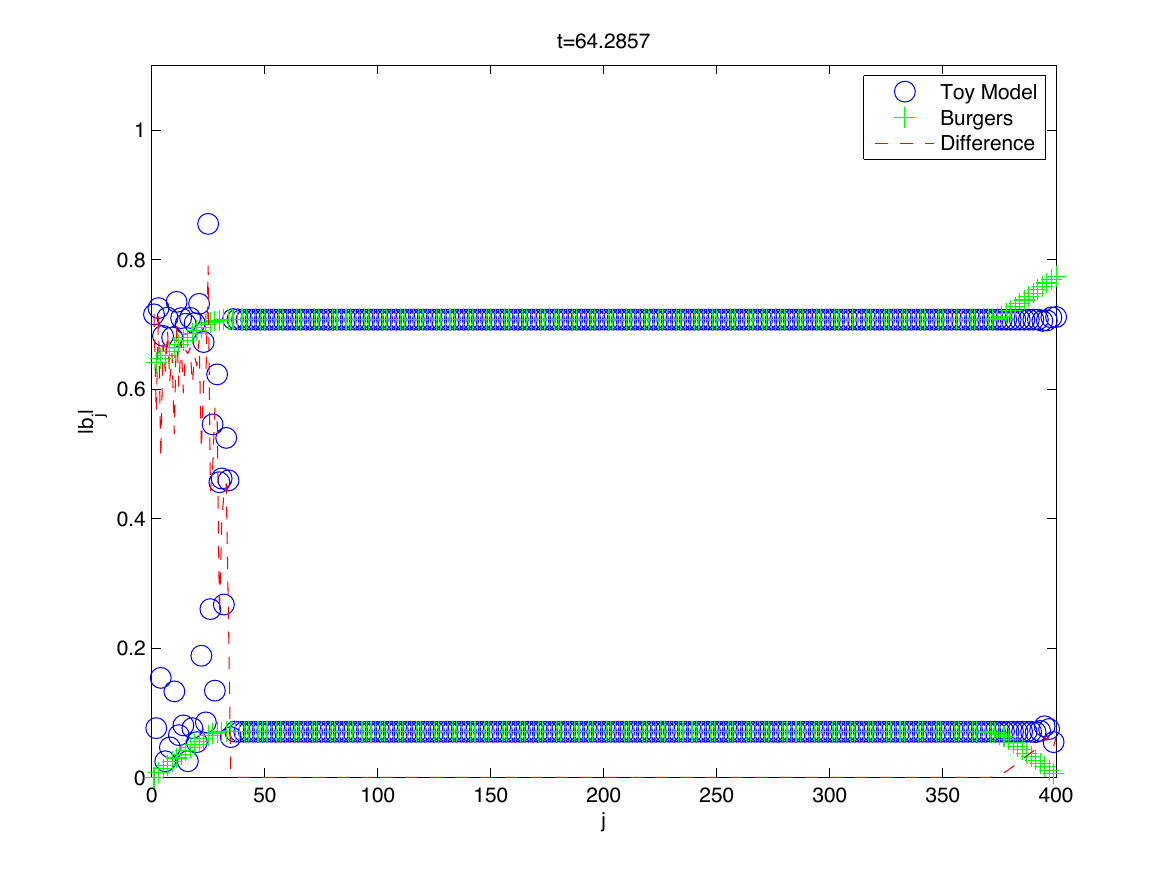}}
   \subfigure{\includegraphics[width=4cm,type=pdf,ext=.pdf,read=.pdf]{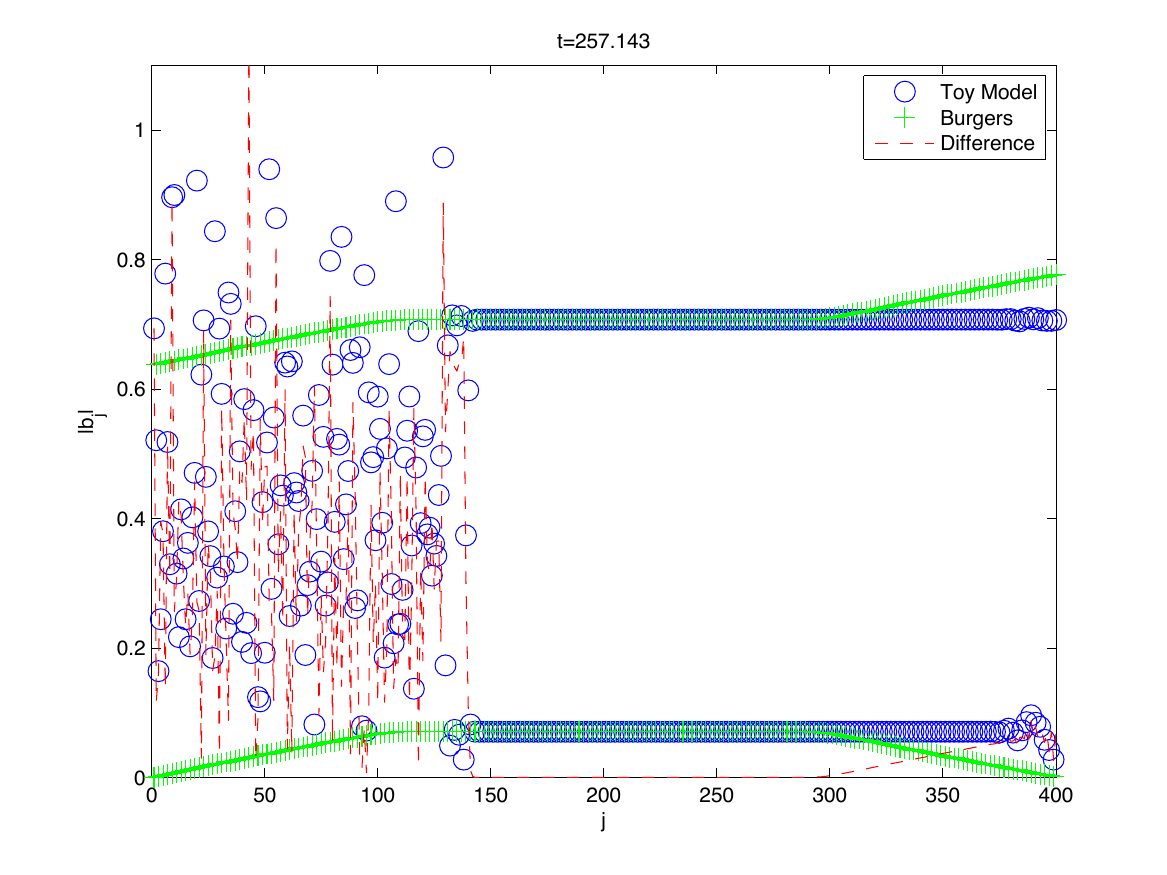}}
   \subfigure{\includegraphics[width=4cm,type=pdf,ext=.pdf,read=.pdf]{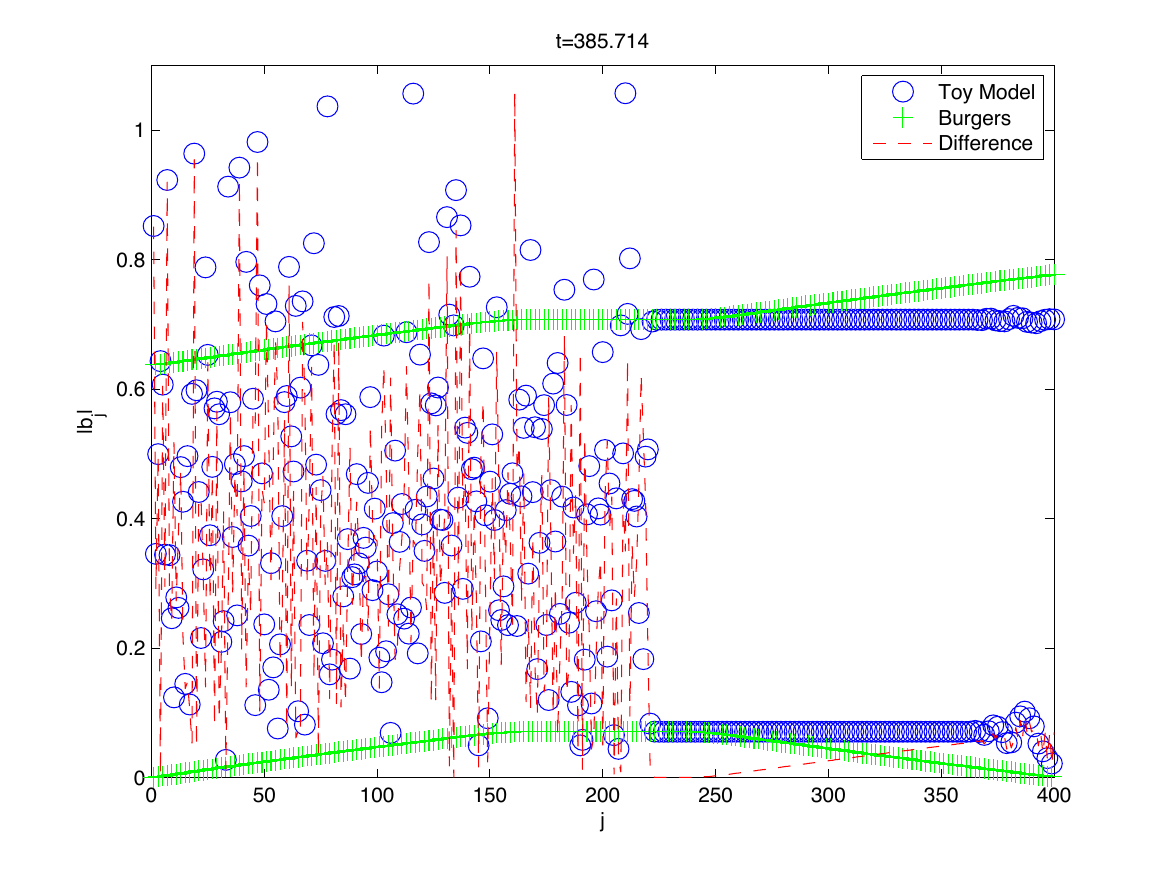}}
   \subfigure{\includegraphics[width=4cm,type=pdf,ext=.pdf,read=.pdf]{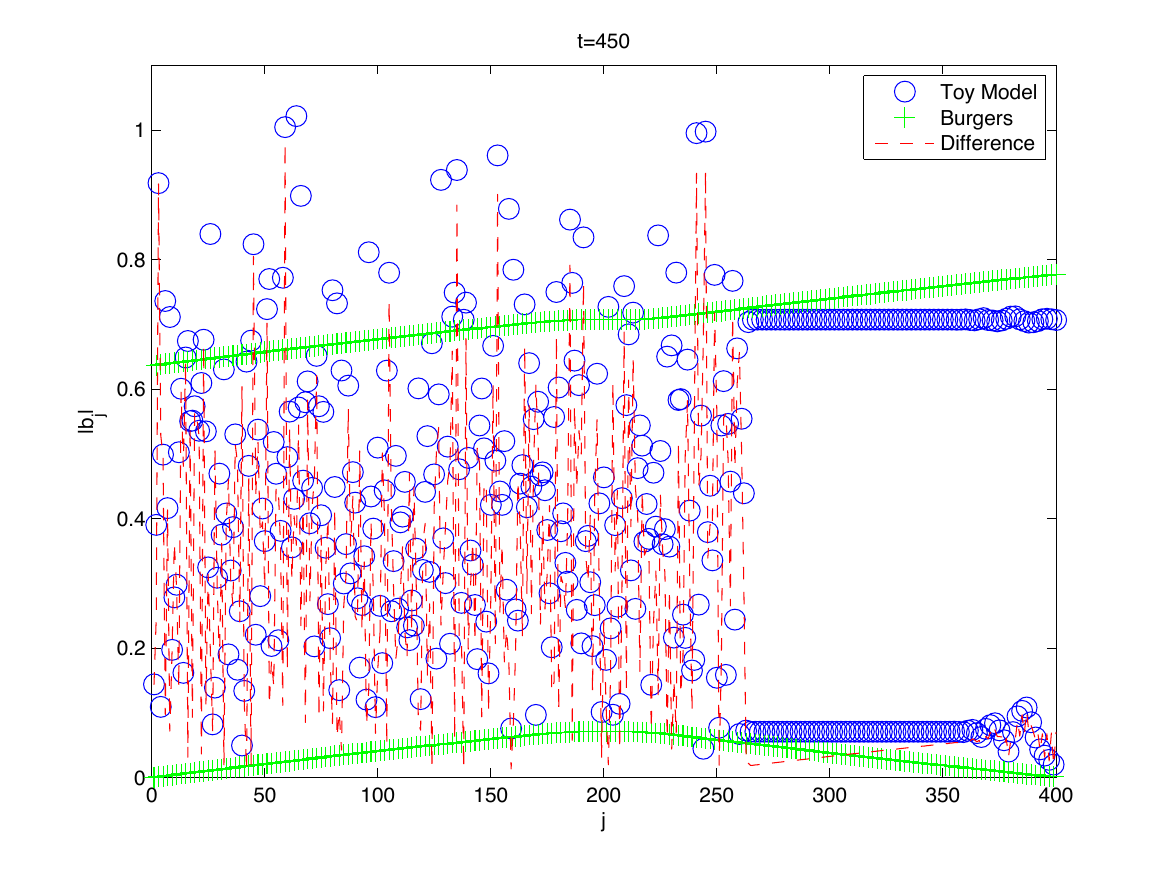}}
   \caption{A comparison of the evolution of solutions stemming from
     an initial amplitude profile given by a truncated Toda soliton
     solution with parameters $\gamma = .1$, $z = .1$ and $c_0 =
     1.0$.}
   \label{todasol}
 \end{figure}

First, we numerically have studied the evolution of out of phase
initial data with amplitude given by the mapped $1$-soliton solution
wave truncated to a domain of $400$ nodes with $\gamma = .1$, $z = .1$
and $c_0 = 1.0$ in Figure \ref{todasol}.  As one can observe, there
are some interesting features in the Toy Model as well as in the
Burgers solution, but backward propagation from the truncation on the
right and drag in the phase cause the solution to differ significantly
on a similar time scale to the rarefaction wave.

 \begin{figure}
   \subfigure{\includegraphics[width=6cm,type=pdf,ext=.pdf,read=.pdf]{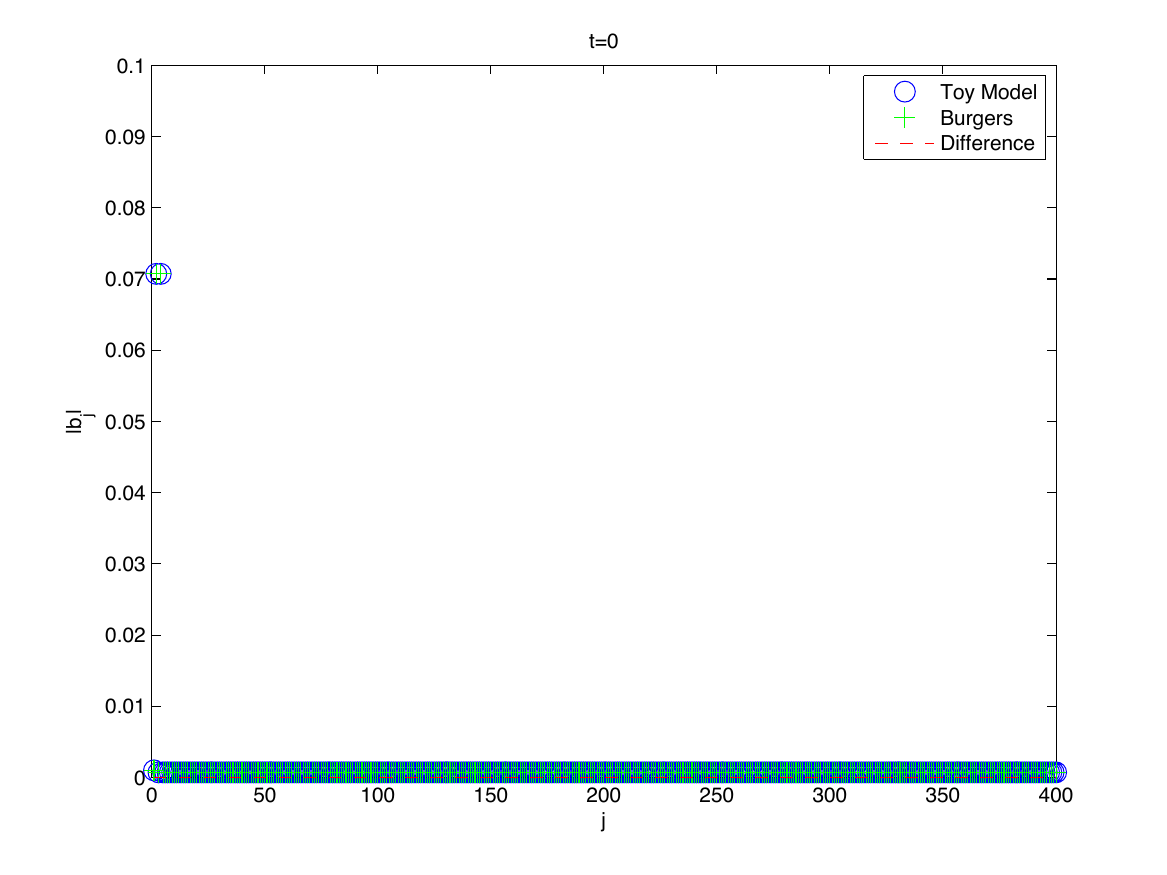}}
   \subfigure{\includegraphics[width=6cm,type=pdf,ext=.pdf,read=.pdf]{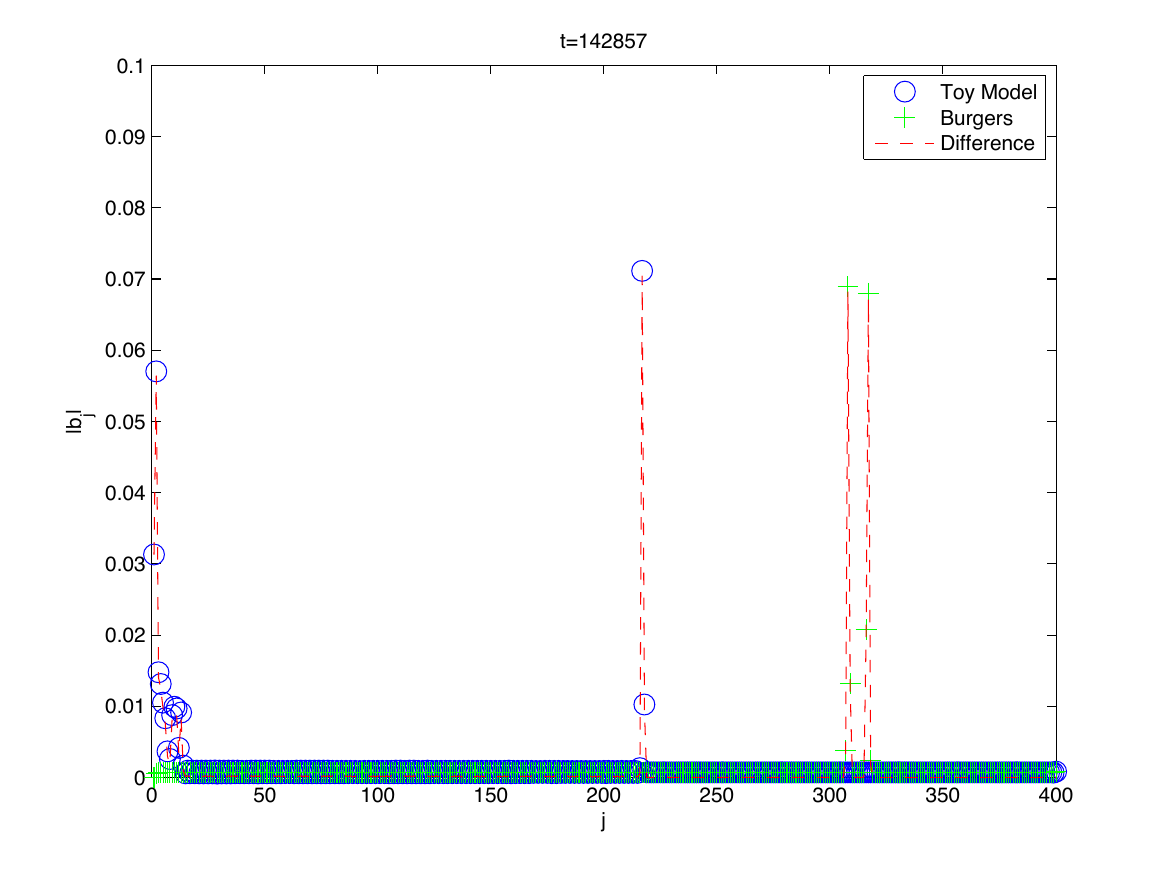}}
   \subfigure{\includegraphics[width=6cm,type=pdf,ext=.pdf,read=.pdf]{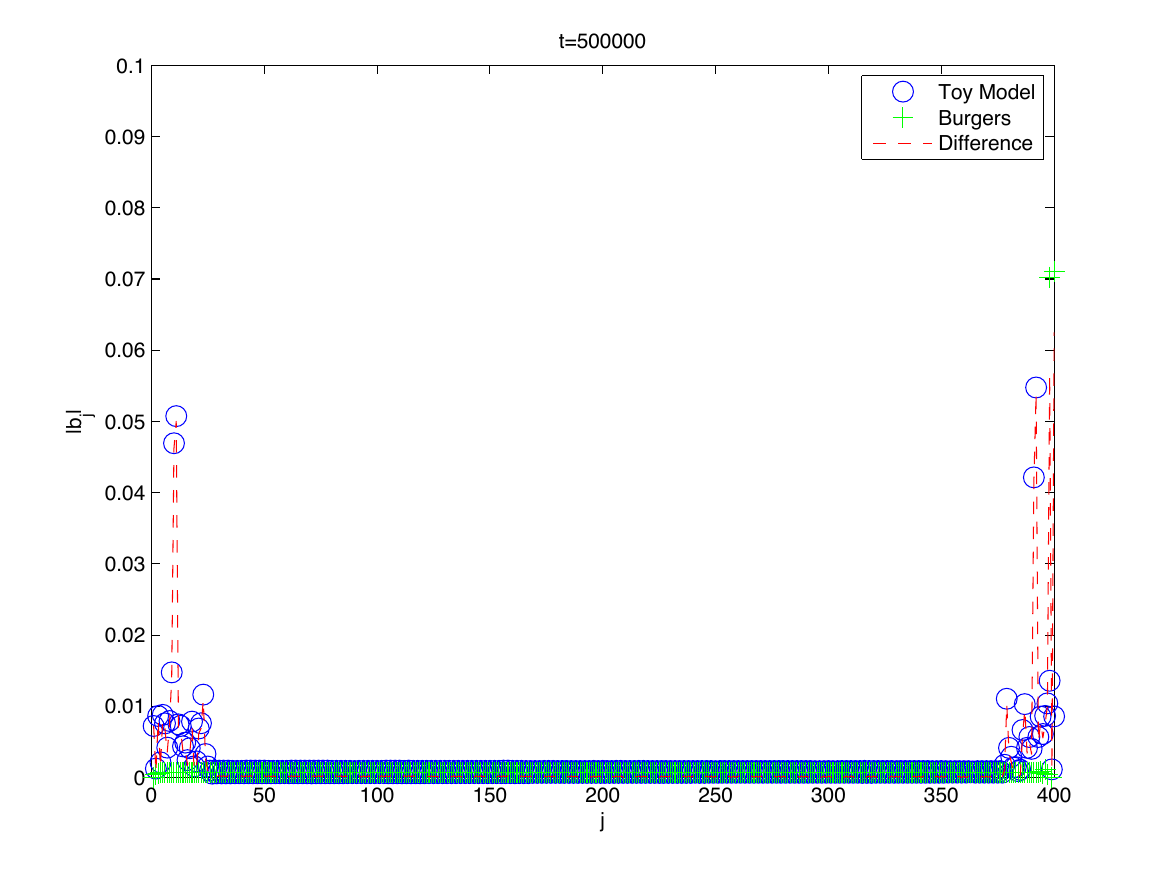}}
   \caption{A comparison of the evolution of solutions stemming from
     an initial amplitude profile given by applying the Toda soliton
     solution on an even number of nodes at the left, then continuing
     on the right as a small non-zero constant with parameters $\gamma
     = .0001$, $z = .01$ and $c_0 = 1.0$.}
   \label{f:todatravel}
 \end{figure}

However,
based upon the insight that the Toda lattice soliton can generate mass
exchange, in Figure \ref{f:todatravel}, we then took out of phase
initial data corresponding to \eqref{Toda_1} on the first $M = 4$
nodes, then continued by $\rho_j = z/2$ for $j >M$.  Interestingly
enough, this initial data generates right traveling nodes at fast time
scales in the Burgers model and as seen in Figure \ref{f:todatravel}
seems to lead to a slow, but steady oscilating traveling wave in the
Toy Model.  The peaked wave observed there persists on longer domains
and with decaying tails to the right (provided the decay is slow).
Also, a peaked wave of this form can be observed choosing $M$ as small
as $6$, but there does not appear to be enough energy if $M =2$ or
$4$.  The dynamics are generally on a much slower scale than that of
the comparable symmetric Burgers equation, but the traveling wave
observed is rather intriguing as a potential for mass transfer.

Lastly, in Figure \ref{rarefaction1}, we studied the evolution of a
simple out of phase rarefaction wave on $N = 2000$ nodes with $\rho_j
= 1$ on the first $M = 100$ notes, then a decaying tail of the form
$\rho_j = 1/(1 + .1 (j-100)^{\frac12}$ for $j > 100$.  The general
flow of the mass can be seen to be to the right, though of course in
such a setting we have no envelope equation with which to try to
approximate the dynamics.
 \begin{figure}
   \subfigure{\includegraphics[width=6cm,type=pdf,ext=.pdf,read=.pdf]{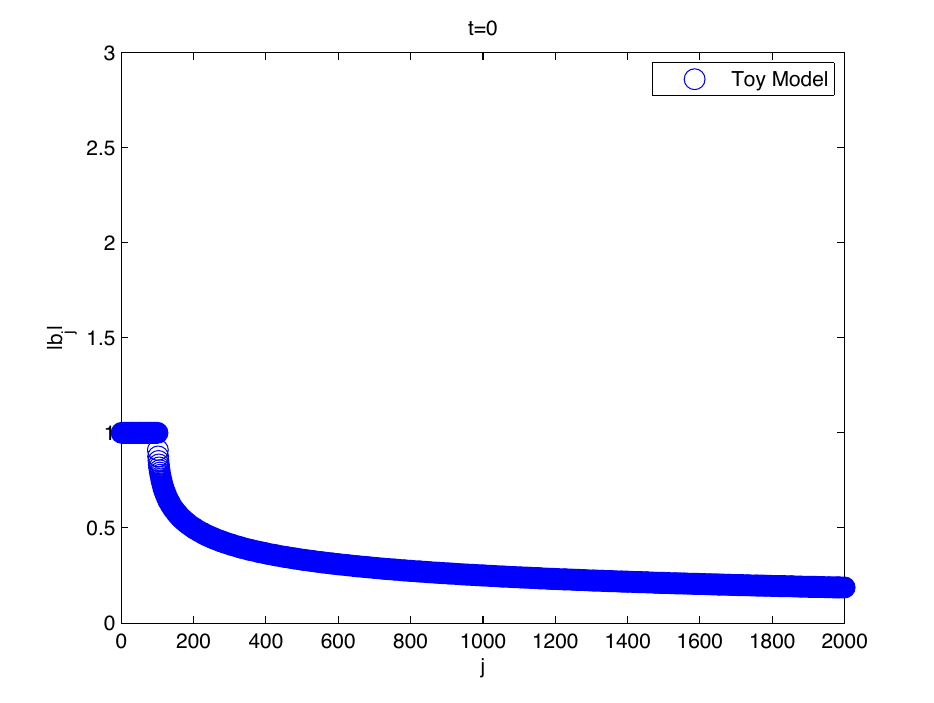}}
   \subfigure{\includegraphics[width=6cm,type=pdf,ext=.pdf,read=.pdf]{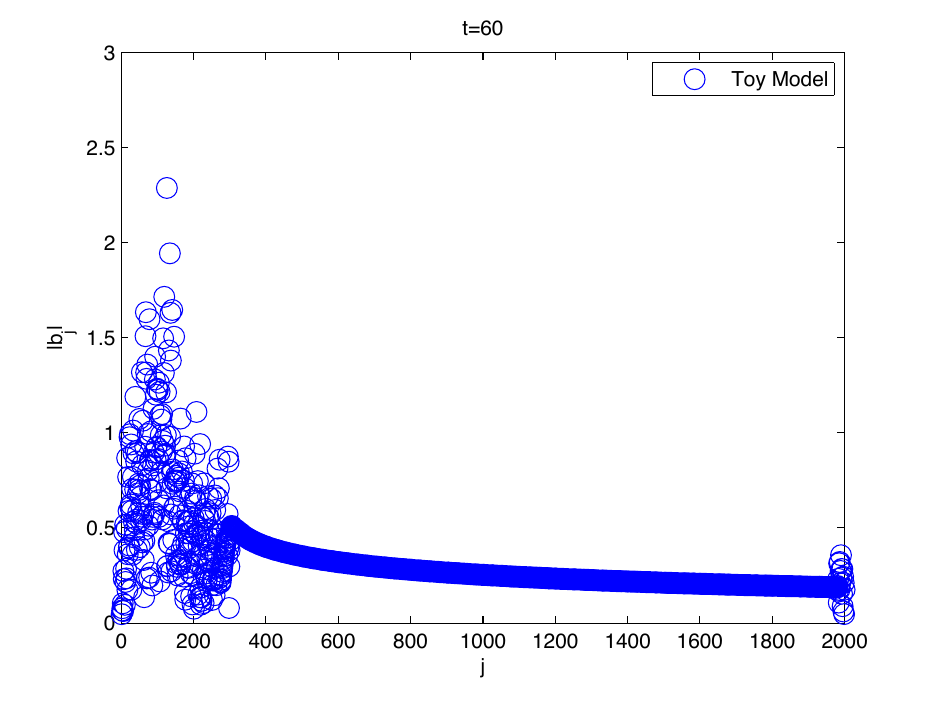}}
   \subfigure{\includegraphics[width=6cm,type=pdf,ext=.pdf,read=.pdf]{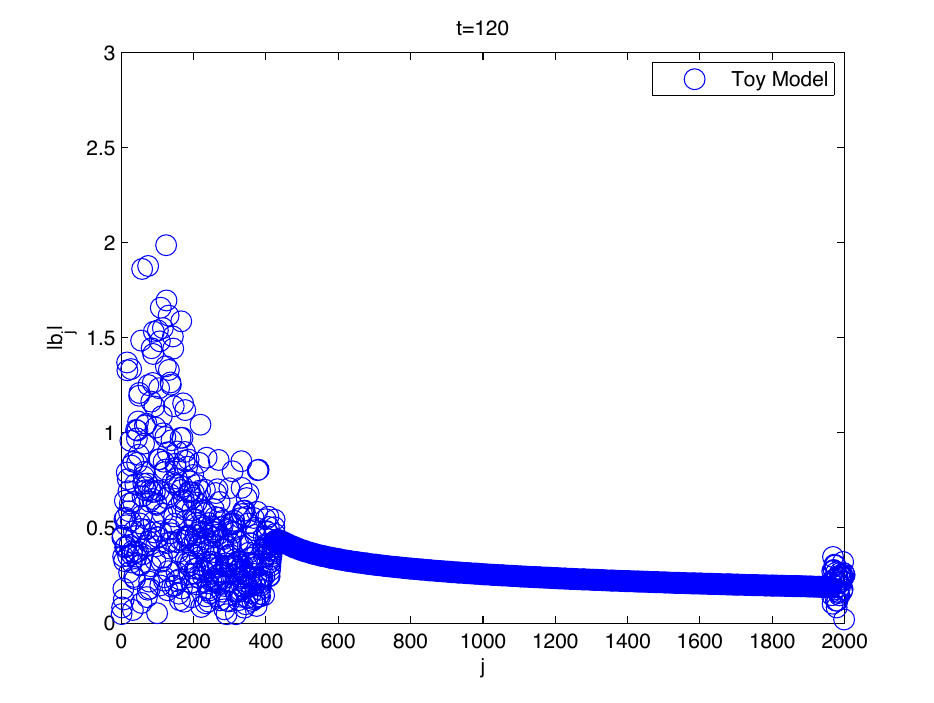}}
   \subfigure{\includegraphics[width=6cm,type=pdf,ext=.pdf,read=.pdf]{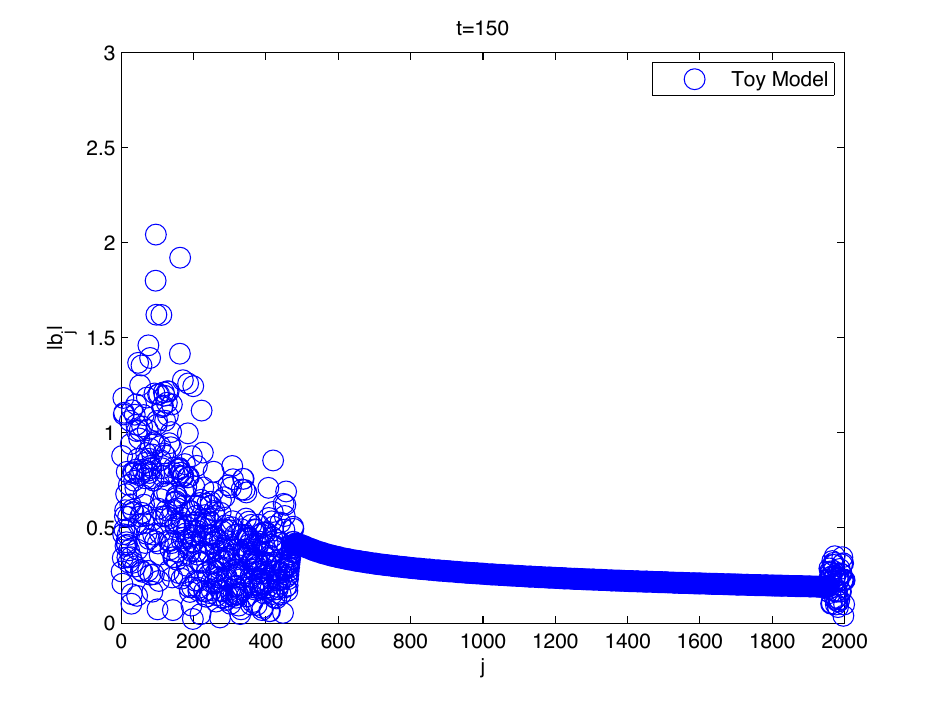}}
   \caption{A comparison of the evolution of solutions for a
     rarefaction wave profile with a decaying tail of form
     $1/(1+.1*j^{\frac12})$ to minimize the backward propagating shock
     from the right boundary.}
   \label{rarefaction1}
 \end{figure}

\subsection*{Acknowledgements}
The first author was supported by the German Research Foundation, CRC
701.  The second author was supported by a combination of an IBM
Junior Faculty Development Award through the University of North
Carolina, NSF Grant 1312874, plus Guest Lectureships through
Universit\"at Bielefeld Summer 2012 and Karlsruhe Institute of
Technology in Summer 2013.
  
Thanks especially to Gideon Simpson for many helpful conversations
about this topic and for allowing us to modify his code to create the
pictures seen here.  The authors also wish to thank Jianfeng Lu, Hiro
Oh and Jonathan Mattingly for interesting discussions, but
particularly Mattingly for suggesting the flux computation displayed
in Section \ref{sec:flux}.  In addition, the authors thank the
anonymous referees for pointing out many places the exposition could
be improved or clarified and in particular for such a careful reading
of the article.  Also, for pointing out the connection to the Toda
Lattice, which proved invaluable.
  
\bibliographystyle{abbrv}

\bibliography{ToyModel2}

\end{document}